\newcommand{\Z}{\mathbb{Z}}
\newcommand{\gam}{\Gamma}
\newcommand{\N}{\mathbb{N}}
\newcommand{\R}{\mathbb{R}}
\newcommand{\bp}{\begin{problem}}
\newcommand{\ep}{\end{problem}}
\newcommand{\ba}{\begin{answer}}
\newcommand{\ea}{\end{answer}}
\newcommand{\ben}{\renewcommand{\theenumi}{\alph{enumi}}

\renewcommand{\labelenumi}{(\theenumi)}\begin{enumerate}}
\newcommand{\een}{\end{enumerate}}
\newtheorem{defin}{Definition}[section]
\newtheorem{thm}[defin]{Theorem}
\newtheorem{obs}[defin]{Observation}
\newtheorem{cor}[defin]{Corollary}
\newtheorem{rem}[defin]{Remark}
\newtheorem{lem}[defin]{Lemma}
\newtheorem{prop}[defin]{Proposition}
\title[Expanders and RAAGs]{Expanders and right-angled Artin groups}
\begin{document}
\date{\today}
\author[R. Flores]{Ram\'{o}n Flores}
\address{Ram\'{o}n Flores, Department of Geometry and Topology, University of Seville, Spain}
\email{ramonjflores@us.es}

\author[D. Kahrobaei]{Delaram Kahrobaei}
\address{Delaram Kahrobaei, Department of Computer Science, University of York, UK, New York University, Tandon School of Engineering, PhD Program in Computer Science, CUNY Graduate Center}
\email{dk2572@nyu.edu, delaram.kahrobaei@york.ac.uk}

\author[T. Koberda]{Thomas Koberda}
\address{Thomas Koberda, Department of Mathematics, University of Virginia, Charlottesville, VA 22904}
\email{thomas.koberda@gmail.com}

\begin{abstract}
The purpose of this article is to give a characterization of families of expander graphs via right-angled Artin groups. We prove that
a sequence of simplicial graphs $\{\gam_i\}_{i\in\mathbb{N}}$ forms a family of expander graphs if and only if
a certain natural mini-max invariant arising from the cup product in the cohomology rings of the groups $\{A(\gam_i)\}_{i\in\mathbb{N}}$ agrees with the
Cheeger constant of the sequence of graphs, thus allowing us to characterize expander graphs via cohomology. This result is proved in the more general framework of \emph{vector space expanders}, a novel structure consisting of sequences of vector spaces equipped with vector-space-valued bilinear pairings which satisfy a certain mini-max condition. These objects can be considered to be analogues of expander graphs in the realm of linear algebra, with a dictionary being given by the cup product in cohomology, and in this context represent a different approach to expanders that those developed by Lubotzky-Zelmanov and Bourgain-Yehudayoff.

\end{abstract}

\maketitle
%\tableofcontentso

\section{Introduction}
\label{Intro}

Expander graphs, which are infinite  sequences of graphs  of bounded valence which are uniformly difficult to
disconnect, are of  fundamental importance in  discrete mathematics, graph theory, knot theory,
network theory, and statistical mechanics, and have a host of applications in computer science including to
probabilitstic computation, data organization, computational flow,
amplification of hardness, and construction of hash functions~\cite{CLG,GILVZ,HLWBAMS}. Many constructions of
graph expander families are now known, though originally explicit constructions were few
 despite the fact that their existence  is relatively easy  to prove through probabilistic methods
(see~\cite{KowalskiBook,Alon86,LPS88} for discussions of both explicit and probabilistic constructions).

In this  paper,  we provide a new perspective on graph expander families that relates them to fundamental objects in geometric group theory,
and which allows them to be probed in a novel way through linear algebraic methods.
In particular, we  characterize families of expander  graphs  through their  associated  right-angled Artin groups, and in the process define
the notion of \emph{vector space expander families}.

Recall that a \emph{simplicial graph} (sometimes known in the literature as a \emph{simple} graph) is an undirected graph with no double
edges between any pair of vertices and with no edges whose source and target coincide.
If $\gam$ is a finite simplicial graph
with vertex set $\mathrm{Vert}(\gam)$ and edge set $\mathrm{Edge}(\gam)$,
we define the \emph{right-angled Artin group} on $\gam$ by
\[A(\gam)=\langle \mathrm{Vert}(\gam)\mid [v_i,v_j]=1 \textrm{ if and only if }\{v_i,v_j\}\in \mathrm{Edge}(\gam)\rangle.\]

It is well-known that the isomorphism type of a finite  simplicial graph is uniquely  determined by the corresponding  right-angled Artin group,
and thus all the combinatorial properties one may assign to $\gam$ should be reflected in the
intrinsic algebra of $A(\gam)$~\cite{Droms87,Sabalka09,KoberdaGAFA,koberda21survey}.

 If $A(\gam)$ is given via a presentation as above (as opposed to as an abstract group), then there is a trivial way to pass between the graph
 $\gam$ and elements of the group $A(\gam)$. Indeed, the vertices of $\gam$ are then identified with the generators in the presentation,
 and the adjacency relation in $\gam$ is exactly the commutation relation among generators of $A(\gam)$. The problem with this
 perspective is that a choice of generators of $A(\gam)$ is \emph{not canonical}.
 For instance, it is possible to find a generating set of $A(\gam)$ that
 such that commutation relations between generators have nothing to do with the combinatorics of $\gam$.
 The point of this paper is to
 translate between the combinatorics of $\gam$ and the algebraic structure of $A(\gam)$
 in a way that is \emph{intrinsic} to $A(\gam)$. Specifically,
 we wish to characterize graph expander families in a canonical algebraic way, and in particular without any reference to specific generators
 of the right-angled Artin group.
Some examples of  this principle are as follows:

\begin{enumerate}
\item
$A(\gam)$ decomposes as a nontrivial direct product if  and only if $\gam$ is a nontrivial join~\cite{Servatius1989}.
\item
$A(\gam)$ decomposes as a nontrivial free product if and only if $\gam$ is disconnnected~\cite{BradyMeier01,koberda21survey}.
\item
$A(\gam)$ contains a subgroup isomorphic to a product $F_2\times F_2$ of
nonabelian free groups if and only if  $\gam$ has a full subgraph which is  isomorphic to a square~\cite{Kambites09,KK2013gt}.
\item
The poly-free length of $A(\gam)$ is two if and only if $\gam$ admits an independent set $D$ of vertices
such that every cycle in $\gam$ meets $D$ at least twice~\cite{HS2007}.
\item
$A(\gam)$ is obtained from infinite
cyclic groups through iterated  free products and  direct products if  and  only if $\gam$ contains no full subgraph
which is isomorphic to a path of length three~\cite{KK2013gt,KK2018jt}.
\item
$A(\gam)$ is a semidirect product of
two free groups of finite rank  if and only if $\gam$ is a finite tree or a finite complete bipartite graph~\cite{HS2007}.
\item
There is a finite nonabelian group acting faithfully on $A(\gam)$  by outer automorphisms if and only if  $\gam$ admits a nontrivial
automorphism~\cite{FKK2019}.
\item
A graph $\gam$ with $n$ vertices is $k$--colorable if and only if there is  a  surjective map
\[A(\gam)\to \prod_{i=1}^k F_i,\] where for $1\leq i\leq k$ the group $F_i$ is a free group of rank $m_i$, and where $\sum_{i=1}^k m_i=n$
~\cite{FKK2020a}.
\end{enumerate}

In this paper, we develop this dictionary by characterizing graph expander families through the intrinsic algebra of right-angled Artin groups.
Recall  that a family $\{\gam_i\}_{i\in\N}$ of
finite graphs is called a \emph{graph expander family}  if the
number  of  vertices in $\gam_i$ tends to  infinity as $i$ tends to  infinity, if the
 valence of each vertex of $\gam_i$ is bounded  independently of $i$,  and if a certain
 isoperimetric invariant called the~\emph{Cheeger constant}
 (or \emph{expansion constant}) of each $\gam_i$ is uniformly bounded away from zero. We  refer the reader
 to Section~\ref{sec:expander} for precise definitions. We remark that in general, graph expander families are not assumed
 to consist of simplicial graphs, though for the purposes of the algebraic dictionary we develop here,
  we will retain a blanket assumption that all graphs under consideration
 are simplicial unless explicitly noted otherwise.

The main result of the present paper is to give an intrinsic algebraic
characterization of graph expander  families via right-angled Artin groups, without
any reference to distinguished generating sets. In order to achieve this, one must define
a certain analogue $h_V$ of the Cheeger constant that can be
described from the data of the right-angled Artin group. This constant is constructed in terms of the triple  \[\{(H^1(A(\gam),L),H^2(A(\gam),L),\smile)\},\] where $H^i(A(\gam),L)$ is the $i^{th}$ cohomology group of $A(\gam)$ with coefficients in a field $L$, and $\smile$ the cup product restricted to $H^1(A(\gam),L)$ (see \ref{sec:defcheeger}). The following result, which is a central pillar of this paper, establishes the link between the two versions of the Cheeger constant:

\begin{prop}[cf.~Theorem~\ref{lem:cheeger}]\label{prop:main-technical}
Let $\gam$ be a finite simplicial graph, let $h_{\gam}$ denote the Cheeger constant of $\gam$, and let $h_V$ denote the Cheeger constant
of the triple \[\{(H^1(A(\gam),L),H^2(A(\gam),L),\smile)\}.\] Then $h_{\gam}=h_V$.
\end{prop}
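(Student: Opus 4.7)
The plan is to leverage the classical identification (Kim--Roush, Charney--Davis) of $H^*(A(\gam),L)$ with the exterior Stanley--Reisner ring of the flag complex on $\gam$. Under this identification, $H^1(A(\gam),L)$ carries a distinguished basis $\{x_v\}_{v\in\mathrm{Vert}(\gam)}$, and $H^2(A(\gam),L)$ carries a basis $\{x_u\smile x_v\}_{\{u,v\}\in\mathrm{Edge}(\gam)}$, with $x_u\smile x_v=0$ whenever $\{u,v\}$ is a non-edge. The proposition then reduces to a linear-algebraic comparison between the combinatorial Cheeger data of $\gam$ and the rank data of the cup-product pairing restricted to $H^1$.

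For the easy inequality $h_V\le h_\gam$, I would take a vertex subset $S$ realizing $h_\gam$ and form the coordinate subspaces $W_S:=\mathrm{span}\{x_v:v\in S\}$ and $W_{S^c}$. The image $W_S\smile W_{S^c}\subset H^2$ is spanned by the linearly independent basis vectors $\{x_u\smile x_v:u\in S,\ v\in S^c,\ \{u,v\}\in\mathrm{Edge}(\gam)\}$, so $\dim(W_S\smile W_{S^c})=|\partial S|$ and this test pair realizes the ratio $h_\gam$ in the mini-max defining $h_V$.

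The reverse inequality $h_\gam\le h_V$ is the main obstacle: one must show that no clever non-coordinate pair $(W,W')$ beats the vertex-subset ratio. My plan is to exploit the diagonal torus action of $(L^\times)^{\mathrm{Vert}(\gam)}$ on $H^1$ defined by $x_v\mapsto \lambda_v x_v$: this extends to an algebra automorphism of $H^*(A(\gam),L)$ and therefore preserves both $\dim W$ and $\dim(W\smile W')$. Running a generic one-parameter subgroup, the subspace $W$ degenerates in the Grassmannian $\mathrm{Gr}(\dim W, H^1)$ to a coordinate subspace $W_I$ for some $I\subset\mathrm{Vert}(\gam)$ with $|I|=\dim W$, and similarly $W'$ degenerates to some $W_{I'}$.

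The technical difficulty is that these torus limits need not remain a complementary pair, and $\dim(W\smile W')$ is only lower semicontinuous under such degenerations, so one cannot directly substitute $(W_I, W_{I'})$ into the defining ratio. I expect the core of the proof to be a combinatorial/matroid step showing that, despite this loss of transversality, one can extract from the Plücker data of $W$ (equivalently, from the matroid cut out by the non-vanishing Plücker coordinates) a vertex subset $S$ of size $\dim W$ satisfying $|\partial S|\le\dim(W\smile W')$. A natural fallback would be to bypass semicontinuity altogether and prove this vertex-extraction inequality directly: given a basis $\{w_1,\dots,w_k\}$ of $W$ with combined support $U:=\bigcup_i\mathrm{supp}(w_i)$, bound the kernel of $W\otimes W'\to H^2$ from above by the non-edges witnessed inside $U$, then select $S\subset U$ of the correct size by a greedy/matroid exchange argument. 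This combinatorial inequality is the crux of the theorem and the step on which I expect the real work to concentrate.
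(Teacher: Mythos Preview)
Your proposal rests on a misreading of the invariant $h_V$. In this paper $h_V$ is \emph{not} a mini-max over complementary pairs $(W,W')$ measuring $\dim(W\smile W')$; for a subspace $F\subset V$ one sets $C=F^{\perp}=\{v\in V: q(f,v)=0\ \forall f\in F\}$ and defines
\[
h_F=\frac{\dim V-\dim F-\dim C+\dim(C\cap F)}{\dim F},\qquad h_V=\min_{0<\dim F\le (\dim V)/2} h_F.
\]
Your ``easy'' direction therefore tests the wrong quantity. Even on its own terms it is incorrect: you assert $\dim(W_S\smile W_{S^c})=|\partial S|$, but the left side counts \emph{edges} between $S$ and $S^c$, while $\partial S$ in this paper is the \emph{vertex} boundary (vertices outside $S$ adjacent to $S$); these differ as soon as two vertices of $S$ share a neighbour in $S^c$. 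With the correct definition the computation for a coordinate subspace $F=W_S$ goes through $C$: one checks that $C$ is spanned by the duals of vertices outside $S\cup\partial S$ together with the duals of isolated vertices of $S$, so $\dim C-\dim(C\cap F)=|\mathrm{Vert}(\gam)|-|S|-|\partial S|$ and $h_F=|\partial S|/|S|$.

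For the hard direction the paper does not degenerate $F$ via a torus action. It fixes an \emph{admissible tuple} $E^*\subset\{v_1^*,\dots,v_n^*\}$ (a pivot set: a $j$-subset on which the $j\times j$ minor of a basis matrix for $F$ is invertible), chosen so that the induced subgraph on $E$ has as few isolated vertices as possible, and takes $F'=\mathrm{span}(E^*)$. One then shows, by an iterative modification of the standard basis $\{f_1,\dots,f_j,v_{j+1}^*,\dots,v_n^*\}$, that every $x\in C$ decomposes as $(C\cap F)+(C'\cap Y)$ where $Y=\mathrm{span}\{v_{j+1}^*,\dots,v_n^*\}$; this gives $\dim C-\dim(C\cap F)\le \dim C'-\dim(C'\cap F')$ and hence $h_{F'}\le h_F$. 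Your torus-limit/matroid sketch, besides targeting the wrong numerator, never engages with the term $\dim(C\cap F)$, which is precisely what makes the linear-algebraic Cheeger constant match the vertex (rather than edge) boundary, and is where the work in the paper's argument is concentrated.
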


Proposition~\ref{prop:main-technical} is the key in establishing a group-theoretic description of expander graphs. Our main result is therefore
as follows:

\begin{thm}\label{cor:raag}
Let $\{\gam_i\}_{i\in\N}$ be a family of finite
simplicial graphs, let $\{A(\gam_i)\}_{i\in\N}$ denote the corresponding family of right-angled Artin
groups, and let $L$ be an arbitrary field. Then $\{\gam_i\}_{i\in\N}$ is a graph expander family if and only if:
\begin{enumerate}
\item
The rank (i.e.~size of the smallest generating set) of $A(\gam_i)$ tends to infinity as $i$ tends to infinity.
\item
The rank of the centralizer of each nontrivial element of $A(\gam_i)$ is bounded independently of $i$.
\item
The Cheeger constant of the family \[\{(H^1(A(\gam_i),L),H^2(A(\gam_i),L),\smile)\}_{i\in\N}\] is bounded away from zero.
\end{enumerate}
\end{thm}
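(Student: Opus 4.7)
The plan is to match the three conditions (1)--(3) of the theorem with the three defining properties of a graph expander family: that the vertex count of $\gam_i$ tends to infinity, that the maximum valence of $\gam_i$ is bounded independently of $i$, and that the Cheeger constant $h_{\gam_i}$ is bounded away from zero. I would prove three separate equivalences, one for each pair of corresponding properties, and assemble them to conclude. For the first equivalence (vertex count versus rank), one observes that the abelianization satisfies $A(\gam)^{ab} \cong \Z^{|\mathrm{Vert}(\gam)|}$, so any generating set of $A(\gam)$ surjects onto a generating set of $\Z^{|\mathrm{Vert}(\gam)|}$, forcing $\mathrm{rank}(A(\gam)) \geq |\mathrm{Vert}(\gam)|$. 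The reverse inequality is trivial since the vertex set itself generates. Hence $\mathrm{rank}(A(\gam_i))$ tends to infinity if and only if $|\mathrm{Vert}(\gam_i)|$ does.

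The second equivalence, between bounded valence and condition (2), is the main technical step. For a vertex $v$, the centralizer in $A(\gam)$ is the parabolic subgroup $\langle v \rangle \times A(\lk(v))$, of rank $\deg(v)+1$; a uniform bound on centralizer ranks therefore immediately yields a bound on all valences. For the converse direction, I would invoke Servatius' classical description of centralizers in right-angled Artin groups. After conjugating an arbitrary nontrivial element to cyclically reduced form $w = w_1\cdots w_k$ with pairwise commuting ``indivisible'' factors, the centralizer decomposes as $\langle w_1\rangle \times \cdots \times \langle w_k\rangle \times A\bigl(\bigcap_i \lk(\supp w_i)\bigr)$. Because the supports $\supp w_i$ are pairwise adjacent in $\gam$, the integer $k$ is controlled by the clique number of $\gam$, itself at most one more than the maximum valence; the rank of the link factor is also bounded above by the maximum valence. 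This yields a uniform bound on centralizer ranks purely in terms of the maximum valence of $\gam$.

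For the third equivalence, I would appeal directly to Proposition~\ref{prop:main-technical}: since $h_{\gam_i} = h_V$ for every $i$, the sequence $\{h_{\gam_i}\}$ is bounded away from zero if and only if the sequence of vector-space Cheeger constants of $\{(H^1(A(\gam_i),L), H^2(A(\gam_i),L),\smile)\}$ is. Combining the three equivalences then yields the biconditional in the statement of the theorem.

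The principal obstacle is the second step, not because either direction is subtle in isolation but because the converse requires the full Servatius-style structure theorem for centralizers of \emph{arbitrary} elements of $A(\gam)$, rather than only the parabolic centralizers of generators. Once this is in hand, the rest is assembly: the cohomological content of the theorem is absorbed entirely into Proposition~\ref{prop:main-technical}, and the remaining equivalences are essentially bookkeeping about ranks and valences.
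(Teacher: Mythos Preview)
Your proposal is correct and follows essentially the same route as the paper: match each of the three conditions to the corresponding graph-expander property via (1) the abelianization/$H^1$ identification, (2) Servatius' centralizer theorem (the paper packages this as Corollary~\ref{cor:valence}, obtaining the exact equality $\max_{x}R(x)=N+1$ rather than just your bound, but either suffices), and (3) Proposition~\ref{prop:main-technical}.
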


 This result is proved in the more general framework of ~\emph{vector space
 expanders} (with a precise definition in Subsection~\ref{sec:vs-exp} below).
 This is a certain sequence of  triples $\{(V_i,W_i,q_i)\}_{i\in\N}$, each of which is  defined over  a fixed field $L$, where each  $V_i$ is a
 finite dimensional vector space such that $\dim V_i\to  \infty$  as $i\to\infty$. Each $W_i$ is an $L$--vector space, and $q_i$ is a
 symmetric or  anti-symmetric $W_i$--valued bilinear pairing
 on $V_i$. The  family  $\{(V_i,W_i,q_i)\}_{i\in\N}$
 is a  vector space expander family if  the pairings $\{q_i\}_{i\in\N}$ satisfy certain linear algebraic criteria
 called \emph{bounded $q_i$--valence} and \emph{bounded  Cheeger constant} in  a uniform
 way. As mentioned already, the Cheeger constant is defined generally for the data $(V,W,q)$ (see Subsection~\ref{sec:vs-exp} for details).

 In this context, the previous theorem can be restated succinctly as follows:
 
\begin{thm}\label{thm:main}
Let $\{\gam_i\}_{i\in\N}$ be a family of
 finite simplicial graphs, and let $\{A(\gam_i)\}_{i\in\N}$ denote the corresponding family of right-angled Artin
groups. Then $\{\gam_i\}_{i\in\N}$ is a graph
 expander family if and only if \[\{(H^1(A(\gam_i),L),H^2(A(\gam_i),L),\smile)\}_{i\in\N}\] is a vector space
expander family.
\end{thm}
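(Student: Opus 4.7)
The plan is to verify that each of the three defining conditions for the cohomological triples $\{(H^1(A(\gam_i),L),H^2(A(\gam_i),L),\smile)\}$ to form a vector space expander family is equivalent to the corresponding defining condition on the graph family $\{\gam_i\}$. The algebraic input is the standard identification of $H^*(A(\gam),L)$ with the exterior Stanley-Reisner algebra of $\gam$: namely, $H^*(A(\gam),L)$ is the quotient of the exterior algebra $\Lambda_L(L^{\mathrm{Vert}(\gam)})$ by the two-sided ideal generated by $v^*\wedge w^*$ for pairs $\{v,w\}\notin\mathrm{Edge}(\gam)$. This yields a canonical basis $\{v^*\}$ of $H^1(A(\gam),L)$ indexed by vertices, a canonical basis of $H^2(A(\gam),L)$ indexed by edges, and the rule that $v^*\smile w^*\neq 0$ if and only if $\{v,w\}\in\mathrm{Edge}(\gam)$.

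With this dictionary in hand, I would proceed in three steps. First, the dimension condition $\dim_L H^1(A(\gam_i),L)\to\infty$ for vector space expanders is equivalent to $|\mathrm{Vert}(\gam_i)|\to\infty$, the first condition in the definition of a graph expander family. Second, the bounded valence condition on the pairings $q_i=\smile$ translates, via the cup product dictionary, to uniform boundedness of the degrees of vertices of $\gam_i$: for the basis vector $v^*$, the basis vectors $w^*$ pairing nontrivially with it correspond exactly to the neighborhood of $v$ in $\gam$, so the combinatorial and linear-algebraic notions of valence agree. Third, Proposition~\ref{prop:main-technical} directly gives the equivalence of the uniform lower bounds on the Cheeger constants $h_{\gam_i}$ and the Cheeger constants $h_V$ computed from the cohomology triples.

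The main obstacle I anticipate is the second step, where one must show that the linear-algebraic notion of valence attached to the triple $(V,W,q)$ in the definition of vector space expander families recovers the graph-theoretic valence when applied to the cup product pairing on cohomology. The rigidity of the exterior Stanley-Reisner presentation, together with the fact that the triple $(H^1,H^2,\smile)$ recovers $\gam$ up to graph isomorphism, should make this a matter of unwinding definitions rather than a genuine difficulty, but care is needed because the vector space valence is formulated without reference to any distinguished basis. The real technical content of the theorem is carried by Proposition~\ref{prop:main-technical}; once it is available, the statement above becomes a direct translation between the combinatorial data of $\gam_i$ and the cohomological data of $A(\gam_i)$.
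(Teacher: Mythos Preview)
Your proposal is correct and follows essentially the same three-step approach as the paper: the dimension condition is immediate, the valence condition is handled by Lemma~\ref{lem:valence}, and the Cheeger constant equality is Theorem~\ref{lem:cheeger} (equivalently Proposition~\ref{prop:main-technical}). One small caveat: your justification for step two (matching $v^*$ with its neighbors in the vertex basis) only gives the inequality $d(V)\leq d(\gam)$; the reverse inequality, which you rightly flag as the delicate point, is not quite ``unwinding definitions'' but a short rank argument---for any spanning set $S$ and basis $B$, some $s\in S$ has nonzero $v_1^*$-coefficient for the vertex $v_1$ of maximal degree, and then the linear map $q(s,-)\colon V\to W$ has image of dimension at least $\deg(v_1)$, forcing at least that many $b\in B$ with $q(s,b)\neq 0$.
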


Observe that connectedness of the graphs in the family is not assumed as a hypothesis of the stated theorems, nor 
shall it be for us in the definition of a graph
expander family. Instead, connectedness of the graphs in both cases is a consequence of the Cheeger constant being nonzero.
We remark that whereas the cohomology  vector spaces of
a  right-angled Artin group depend  on the field over which they  are defined, the  property  of being a  graph  expander  family
or  a vector space  expander family is independent  of the choice of field.
As a further remark concerning the fields occurring in the previous results, it will become apparent to the reader that not
only can $L$ be arbitrary, but it need not be fixed as the index $i$ varies. Indeed, the numerical invariants used to define vector space
expanders are all either related to the non-degeneracy of the bilinear pairing or to dimension, both of which are blind to the intrinsic structure
of the field of definition.

The Cheeger constant of a  finite graph is an invariant that is computable  from the adjacency matrix of the graph. The Cheeger  constant
of  a vector space equipped with a  pairing  is  less obviously computable, since its  definition
quantifies over all subspaces of up to  half the  dimension of  the ambient space (see Subsection~\ref{sec:vs-exp} below).
However, the  reader  will  note  that the methods
in Section~\ref{sec:cheeger} are explicit and constructive, and they do  in fact effectively yield the Cheeger  constant  of  the relevant vector
spaces.

The  notion of a vector space expander family is more flexible than that of
a graph expander family, and  we will illustrate this
with an example of a vector space expander family which does not arise from the cohomology of the right-angled Artin groups associated
to a graph expander family. This is a reflection of the relatively lax hypotheses on the input data of a vector space expander family.
For instance, the vector space  valued bilinear pairing is more or less  arbitrary other  than being assumed to be (anti)--symmetric,
which relaxes much of the inherent  structure of the cup
product on  the cohomology of a right-angled Artin group.
The authors expect that the flexibility of vector space expanders will  contribute to their applicability.

There is another linear-algebraic version of  expanders, called \emph{dimension expanders}, which were proven to  exist by
Lubotzky--Zelmanov in the case of characteristic  zero  fields~\cite{LZ08},  and by Bourgain--Yehudayoff
in the case of  finite  fields~\cite{BY13,Bourg09}. Here, one considers a finite dimensional vector space $V$  and a collection of  $k$
linear maps  $\{T_i\colon V\to  V\}_{1\leq i\leq  k}$.
This data is  called an  \emph{$\epsilon$--dimension expander}  if for all subspaces $W\subset V$ of dimension at  most half of that  of $V$,
the dimension of \[W+\sum_{i=1}^k T_i(W)\] is at least $(1+\epsilon)\dim W$. The construction of dimension expanders
(with $\epsilon$ bounded away from zero, $k$ bounded above,  and the dimension of $V$ tending to infinity) is much harder
over finite fields than over fields of characteristic zero, whereas the constructions in the present paper are independent of the base field.
One bridge between graph expander families and dimension expanders arises from interpretation of
regular graphs of even valence as  Schreier
graphs, from which one can use finitary versions of Kazhdan's property  (T) to  construct the suitable linear  maps. The authors
do not know how to
relate dimension expanders to vector space expanders, since a general right-angled Artin group does not usually admit any
natural endomorphisms
of its first cohomology.

The paper is organized as follows. Section~\ref{sec:expander} introduces the definitions of the objects considered in this paper.
Section~\ref{sec:raag} discusses the cohomology of right-angled Artin groups, and the circle of ideas relating connectedness of graphs,
pairing--connectedness, $q$--valence, graph valence, and ranks of centralizers of elements  in a right-angled  Artin group.
 Section~\ref{sec:cheeger} establishes the main technical result of the paper, namely that  the linear-algebraic Cheeger constant associated
 to a vector space with an (anti)--symmetric bilinear pairing agrees with the Cheeger constant of a finite simplicial graph in the case that
 the vector space is the first cohomology of  the right-angled Artin group on the graph, and the bilinear pairing is  the cup product.
 Section~\ref{sec:ex} builds an example of a vector space expander family not arising from the cohomology of right-angled Artin  groups
 on a graph expander family.

\section{Graph and vector space expanders}\label{sec:expander}
In this section, we recall some relevant facts about graph expander families and define vector space expander families.

\subsection{Graph expander families}
The literature on graph expander families and  their applications is enormous.
The reader may  consult~\cite{HLWBAMS,LubBook,LPS88,KowalskiBook} and the references therein, for  example.
For the sake of brevity, we will only discuss
the combinatorial definition of an expander family.

Let $\gam$  be a finite graph, not necessarily simplicial, with vertex set $\mathrm{Vert}(\gam)$ and edge set $\mathrm{Edge}(\gam)$.
We assume that $\gam$ is undirected. If $A\subset \mathrm{Vert}(\gam)$, we write $\partial A$ for the \emph{neighbors of $A$}.
That is, $\partial A$ consists of the vertices of $\mathrm{Vert}(\gam)$ which are not contained in $A$  but which are adjacent to a vertex in $A$.

If in addition $|A|\leq  |\mathrm{Vert}(\gam)|/2$, we consider the isoperimetric invariant \[h_A=\frac{|\partial A|}{|A|}.\] The \emph{Cheeger constant}
$h_{\gam}$ is defined to be \[h_{\gam}=\min_A h_A,\] where  the minimum is taken over all subsets of $\mathrm{Vert}(\gam)$
satisfying $|A|\leq  |\mathrm{Vert}(\gam)|/2$.

Let $\{\gam_i\}_{i\in \N}$ be a sequence of connected graphs such that $|\mathrm{Vert}(\gam_i)|\to\infty$, such  that each vertex in $\gam_i$ has valence  which is
bounded independently of $i$. We say that $\{\gam_i\}_{i\in \N}$ is a \emph{graph expander family} if $\inf_i h_{\gam_i}>0$.

We note that as is well known, the bound $\inf_i h_{\gam_i}>0$ makes any connectivity assumption of the graphs $\{\gam_i\}_{i\in \N}$
redundant. Indeed, if $\gam$ is disconnected then there is a component $\Lambda$ of $\gam$ that contains at most half of the
vertices of $\gam$. Setting $A=\mathrm{Vert}(\Lambda)$, we obtain $\partial A=\varnothing$, and so $h_{\gam}=0$.

\subsection{Vector space expander families}\label{sec:vs-exp}

Throughout this section and for the rest of the paper, we fix a field  $L$  over which all  vector spaces will be defined.
All bilinear pairings are assumed to be symmetric or anti-symmetric, so that for all suitable vectors $v$  and $w$, we have $q(v,w)=\pm q(w,v)$.
Our reasons for adopting this assumption are that it mirrors an intrinsic property of the  cup product pairing, and because otherwise  the
orthogonal complement of $F$  may  be asymmetric depending on  which  side it is defined. An asymmetric orthogonal complement would
result in an unnecessary layer of subtlety and complication that  would  not enrich the theory  at hand.

\subsubsection{The Cheeger constant}\label{sec:defcheeger}
Let $\mathcal{V}$
be a collection $\{(V_i,W_i,q_i)\}_{i\in\N}$ of finite dimensional vector spaces $V_i$ equipped with vector space valued bilinear pairings
\[q_i\colon V_i\times V_i\to W_i.\]

The \emph{Cheeger constant} of $\mathcal{V}$ is defined by analogy  to  graphs. To begin, let $V$ be a fixed  finite dimensional vector
space and let \[q\colon V\times V\to W\] be a vector space valued bilinear pairing on $V$.
Let  $F\subset V$ be a vector subspace such that $0<\dim F\leq (\dim V)/2$. We
 write $C$ for the \emph{orthogonal complement} of $F$ in $V$, so that
\[C=\{v\in V\mid q(f,v)=0 \textrm{ for all } f\in F\}.\] Clearly $C$ is  a vector subspace of $V$. The
\emph{Cheeger constant} of $F$ is defined to be \[h_F=\frac{\dim V-\dim  F-\dim C+\dim (C\cap F)}{\dim F}.\] The Cheeger constant
of $V$ is defined by \[h_V=\inf_{\dim F\leq (\dim V)/2} h_F.\] We will call $h_V$ the Cheeger  constant of the triple
$(V,W,q)$. We  will suppress $W$ and $q$ from the notation for the Cheeger constant if no confusion can arise.

We note that whereas the Cheeger constant $h_V$ may appear strange at first, it is defined in such a way as to reflect the Cheeger
constant of a graph. To see this last statement illustrated more explicitly, see Lemma~\ref{lem:special}.

\subsubsection{The $q$--valence of a vector space}
Let $V$ be a  finite dimensional vector space, and let  $q$ be a vector space valued bilinear pairing on $V$.
If $\emptyset\neq S\subset V$ and $B$ is a basis for $V$,
we write \[d_B(S)=\max_{s\in S}|\{b\in B\mid q(s,b)\neq 0\}|,\quad d(S)=\min_{B\textrm{ a basis}} d_B(S),\quad
d(V)=\min_{S\textrm{ spans $V$}} d(S).\] We call $d(V)$ the \emph{$q$--valence} of $V$.

\subsubsection{Pairing--connectedness}
Let $V$ and $q$ be as before. We say that $V$ is \emph{pairing--connected} if whenever $V\cong V_0\oplus V_1$ is a nontrivial direct sum
decomposition of $V$, then there are vectors $v_0\in V_0$ and $v_1\in V_1$ such that  $q(v_0,v_1)\neq  0$.

\subsubsection{Defining vector space expanders}

We are now ready to give the definition of a vector space expander family.
\begin{defin}\label{def:vs-expand}
We say that $\mathcal{V}$ is a \emph{vector space expander family} if the following conditions are satisfied:
\begin{enumerate}
\item
We have \[\lim_{i\to\infty} \dim V_i=\infty.\]
\item
There exists an $N$ such that for all $i$, we have $d(V_i)\leq N$.
\item
We have \[h=\inf_i h_{V_i}>0.\]
\end{enumerate}
\end{defin}

The reader may  note  that the first condition is analogous  to the requirement that the number  of vertices in a family of expander graphs
tends to infinity. The  second condition is analogous to the finite valence condition in a family  of expander graphs.

As with the connectedness assumption for graph expander families, the
pairing--connectedness of a vector space $V$ is a formal consequence of $h_V>0$. Precisely, we have the following.

\begin{prop}\label{prop:cheeger-conn}
Let $(V,W,q)$ be as above, and suppose $h_V>0$. Then $V$ is pairing--connected.
\end{prop}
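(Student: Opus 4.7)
I plan to prove the contrapositive: if $V$ is not pairing--connected, then $h_V = 0$. So suppose $V \cong V_0 \oplus V_1$ is a nontrivial direct sum decomposition with $q(v_0, v_1) = 0$ for all $v_0 \in V_0, v_1 \in V_1$. Since the decomposition is nontrivial, both summands have positive dimension, and after possibly swapping indices I may assume $\dim V_0 \leq \dim V_1$, so $0 < \dim V_0 \leq (\dim V)/2$. I take $F = V_0$ as the test subspace for the Cheeger constant.

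The key computation is to identify the orthogonal complement $C$ of $F = V_0$. Since $q(V_0, V_1) = 0$, clearly $V_1 \subseteq C$. Conversely, for any $c \in C$, writing $c = c_0 + c_1$ with $c_0 \in V_0$ and $c_1 \in V_1$, the equation $0 = q(v_0, c) = q(v_0, c_0)$ for all $v_0 \in V_0$ forces $c_0 \in C \cap V_0 = C \cap F$. This yields the direct sum decomposition $C = (C \cap F) \oplus V_1$, and hence
\[
\dim C - \dim(C \cap F) = \dim V_1 = \dim V - \dim F.
\]
Plugging this into the defining formula for $h_F$ gives $h_F = 0$, and so $h_V \leq 0$; since $h_V$ is always nonnegative (as $(C+F)/F$ embeds in $V/F$), we conclude $h_V = 0$, contradicting the hypothesis.

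There is no serious obstacle here; the only things to check carefully are that the chosen $F$ is an admissible test subspace (positive dimension, at most half the total dimension), and the splitting $C = (C \cap F) \oplus V_1$, both of which are immediate from the assumed orthogonal decomposition. The argument essentially says that an orthogonal splitting is literally a witness to $h_V = 0$, exactly paralleling how a disconnected graph has a component witnessing $h_\gam = 0$ in the final paragraph of Subsection~\ref{sec:vs-exp}.
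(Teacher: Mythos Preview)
Your proof is correct and follows essentially the same approach as the paper: take the smaller summand $V_0$ as the test subspace $F$, observe $V_1\subset C$, and deduce that the numerator of $h_F$ is at most zero. Your argument is slightly sharper in that you establish the exact decomposition $C=(C\cap F)\oplus V_1$ (hence $h_F=0$), whereas the paper is content with the inequality $\dim C\geq \dim V_1+\dim(C\cap F)$ yielding $h_F\leq 0$; either version immediately contradicts $h_V>0$.
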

\begin{proof}
Suppose the contrary, so that $V=V_0\oplus V_1$ is a nontrivial splitting of $V$ witnessing the failure of pairing--connectedness. Without
loss of generality, $\dim V_0\leq\dim V/2$. Set $F=V_0$. Note then that $V_1\subset C$, the orthogonal complement of $F$.
If $C\cap F\neq 0$ then $\dim C\geq \dim V_1+\dim (C\cap F)$. It follows that \[\dim V-\dim F-\dim C+\dim(C\cap F)\leq \dim V-\dim V_0-
\dim V_1=0,\] which proves the proposition.
\end{proof}

As we will show in Section~\ref{sec:raag}, pairing--connectedness for the triple \[(H^1(A(\gam),L),H^2(A(\gam),L),\smile)\] is equivalent
to connectedness of $\gam$.

\section{Cohomology, $q$--valence and pairing--connectedness}\label{sec:raag}

In  this section we establish a  generator-free characterization of bounded valence in a graph through cohomology of
the corresponding right-angled Artin group.

\subsection{The cohomology ring of a right-angled Artin group}\label{subsec:coho}
A general reference for this section is~\cite{koberda21survey}, for instance.
Let $\gam$ be a  finite simplicial graph and $A(\gam)$ the corresponding right-angled Artin group. The group $A(\gam)$ is naturally the
fundamental group  of a locally CAT(0) cube complex, called  the \emph{Salvetti complex} $S(\gam)$ of $\gam$. The space $S(\gam)$ is a
classifying space for $A(\gam)$, so that \[H^*(S(\gam),R)\cong H^*(A(\gam),R)\] over an arbitrary ring $R$.
The complex $S(\gam)$ can be built from
the unit cube in $\R^{|\mathrm{Vert}(\gam)|}$, with the coordinate  directions being identified with the vertices of $\gam$. One includes the face
spanned by a collection of edges if the corresponding vertices span a complete subgraph of $\gam$. Finally, one takes the image inside
$\R^{|\mathrm{Vert}(\gam)|}/\Z^{|\mathrm{Vert}(\gam)|}$, so that $S(\gam)$ is a subcomplex of a torus.

With this description, it is clear that one can build $S(\gam)$ out of a collection of tori of various dimensions, one for every complete subgraph
of $\gam$, and by gluing these tori together along distinguished coordinate subtori. The  reader may compare with
the description of the Salvetti complex given in~\cite{CharneyGD}.

Let $L$ be a field, viewed as a trivial $A(\gam)$--module.
We have that \[H^*((S^1)^n,L)\cong\Lambda(L^n),\] the exterior algebra of $L^n$. Via Poincar\'e duality, coordinate subtori
of tori making up $S(\gam)$ give rise  to preferred cohomology generators
in  various degrees of the exterior algebra, and the gluing data of the subtori determines how the exterior algebras corresponding to complete
subgraphs assemble into the cohomology algebra of $S(\gam)$.

To give  slightly more detail, let $\Lambda\subset\gam$ be  a  subgraph.
For  us, a subgraph is  always \emph{full}, in the
sense that  if $\lambda_1,\lambda_2\in \mathrm{Vert}(\Lambda)$ and  $\{\lambda_1,\lambda_2\}\in \mathrm{Edge}(\gam)$ then
$\{\lambda_1,\lambda_2\}\in \mathrm{Edge}(\Lambda)$. Full subgraphs are sometimes called \emph{induced}.
It is a well-known and standard fact that $A(\Lambda)$ is  naturally a subgroup of
$A(\gam)$~\cite{CharneyGD}.
It is  not  difficult to  see  that $A(\Lambda)$ is in fact a retract of $A(\gam)$.
The homology of $A(\gam)$ is easy to compute from the Salvetti complex, and the  cohomology
with trivial coefficients in a field can be easily computed using the Universal Coefficient Theorem. Each complete subgraph
$\Lambda$ of $\gam$ gives an exterior algebra as a subring of $H^*(A(\gam),L)$ via pullback along the retraction map
$A(\gam)\to A(\Lambda)$, and a dimension count shows that this accounts for all the cohomology of $A(\gam)$.

We are mostly concerned with $H^1(A(\gam),L)$ and $H^2(A(\gam),L)$, together with  the cup product pairing  on $H^1(A(\gam),L)$.
We  remark  that the cohomology of right-angled Artin groups and related groups  with nontrivial coefficient  modules
 has been investigated  extensively
(see~\cite{Davis98,MJ05} for example), but for our  purposes  we do not  need  any machinery beyond trivial coefficients.
The next proposition follows easily from the description of the cohomology of the  Salvetti complex above, and from the structure of
exterior  algebras.

\begin{prop}\label{p:coho}
Let $\gam$ be a finite simplicial graph.
\begin{enumerate}
\item
We have isomorphisms of  vector spaces: \[H^1(A(\gam),L)\cong L^{|\mathrm{Vert}(\gam)|},\quad H^2(A(\gam),L)\cong L^{|\mathrm{Edge}(\gam)|}.\]
\item
There is a basis $\{v_1^*,\ldots,v_{|\mathrm{Vert}(\gam)|}^*\}$ for $H^1(A(\gam),L)$ which is in bijection with the set
$\{v_1,\ldots,v_{|\mathrm{Vert}(\gam)|}\}$ of vertices  of $\gam$, and
there is a basis $\{e_1^*,\ldots,e_{|\mathrm{Edge}(\gam)|}^*\}$ of  $H^2(A(\gam),L)$ which is in bijection with the set
$\{e_1,\ldots,e_{|\mathrm{Edge}(\gam)|}\}$ of edges of $\gam$.
\item
The bases in the previous item can be chosen to have the following property:
if $e=\{v_i,v_j\}\in  \mathrm{Edge}(\gam)$ then $v_i^*\smile v_j^*=\pm e^*$, and if $\{v_i,v_j\}\notin  \mathrm{Edge}(\gam)$ then $v_i^*\smile v_j^*=0$.
\end{enumerate}
\end{prop}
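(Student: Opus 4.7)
The plan is to derive all three statements from a direct analysis of the cellular cochain complex of the Salvetti complex $S(\gam)$. I would begin by recording that $S(\gam)$ has exactly one $0$-cell, one $1$-cell per vertex of $\gam$, one $2$-cell per edge of $\gam$, and in general one $k$-cell per $k$-clique of $\gam$. Because $S(\gam)$ is assembled from coordinate subtori and every cellular boundary map of a standard torus $(S^1)^n$ vanishes, the cellular differentials of $S(\gam)$ vanish as well; hence $H^k(A(\gam),L) = H^k(S(\gam),L)$ is free of rank equal to the number of $k$-cliques. Specializing $k=1,2$ gives (1), and taking cellular duals of the $1$- and $2$-cells produces the bases $\{v_i^*\}$ and $\{e^*\}$ of (2).

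For the cup product formula in (3), the key tool is the retraction $r_\Lambda\colon A(\gam) \to A(\Lambda)$ available for every full subgraph $\Lambda \subset \gam$. I would fix vertices $v_i,v_j$ and let $\Lambda$ be the full subgraph of $\gam$ on $\{v_i, v_j\}$. If $\{v_i, v_j\} \in \mathrm{Edge}(\gam)$, then $A(\Lambda) \cong \Z^2$ and $S(\Lambda) = (S^1)^2$, whose cohomology is $\Lambda(L^2)$, so the cup product of the two degree-one generators equals a generator of $H^2((S^1)^2,L)$ up to sign. Naturality of $\smile$ along the retraction then identifies $v_i^* \smile v_j^*$ with $\pm e^*$, where $e = \{v_i,v_j\}$. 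If $\{v_i, v_j\} \notin \mathrm{Edge}(\gam)$, then $A(\Lambda) \cong F_2$ and $S(\Lambda)$ is a wedge of two circles, so the restriction of $v_i^* \smile v_j^*$ lies in $H^2(S(\Lambda),L) = 0$.

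The main technical point is the final step for non-edges: vanishing after restriction to $S(\Lambda)$ alone does not a priori kill a class in $H^2(A(\gam),L)$. To close this gap, I would expand $v_i^* \smile v_j^* = \sum_k c_k\, e_k^*$ in the edge basis and recover each coefficient $c_k$ by restricting along the retraction to the coordinate $2$-torus associated to $e_k$. The same argument shows that this restriction returns zero unless $e_k = \{v_i,v_j\}$, forcing every $c_k$ to vanish in the non-edge case and isolating the single coefficient $\pm 1$ in the edge case. The only remaining subtlety is the orientation-dependent sign in item (3), but this is harmless because the statement is phrased up to $\pm$, which is precisely the flexibility needed to absorb the cellular orientation conventions.
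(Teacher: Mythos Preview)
Your argument is correct and follows the same outline the paper sketches in the paragraph preceding the proposition: compute via the Salvetti complex (a subcomplex of $(S^1)^n$ with vanishing cellular differentials) and read off the cup product from the subtori. One small simplification worth noting: for the non-edge case you switch from the retraction $r_\Lambda$ to the inclusion $\iota_\Lambda$, which is why you need the extra coefficient-extraction step; if instead you observe that $v_i^*$ and $v_j^*$ both lie in the image of the ring homomorphism $r_\Lambda^*\colon H^*(A(\Lambda),L)\to H^*(A(\gam),L)$, then $v_i^*\smile v_j^*\in r_\Lambda^*\bigl(H^2(F_2,L)\bigr)=0$ immediately, with no need to test against each $e_k$. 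Your restriction-to-edge-tori argument is nonetheless valid and has the virtue of handling the edge and non-edge cases uniformly, as well as pinning down that $r_\Lambda^*$ carries the generator of $H^2(\mathbb{Z}^2,L)$ to $\pm e^*$ in the edge case.
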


If $\{e_1,\ldots,e_s\}$ denotes  the set of edges of $\gam$, then Proposition~\ref{p:coho} implies that $H^2(A(\gam))$ is generated
(over any field) by the dual vectors $\{e^*_1,\ldots,e^*_s\}$, and that these vectors  are linearly independent. We fix the basis
$\{e^*_1,\ldots,e^*_s\}$ for $H^2$ once and for all, so  that if $d$ is a $2$--cohomology class then \[d=\sum_{i=1}^s\lambda_i e_i^*.\]
With respect to this fixed basis, we call the elements  $e_i^*$ for which $\lambda_i\neq 0$  the \emph{support}  of $d$, so that
$d$ is \emph{supported} on the $e_i^*$ for  which  $\lambda_i\neq 0$. We will also fix
the basis $\{v_1^*,\ldots,v_{|\mathrm{Vert}(\gam)|}^*\}$ for $H^1$ once and for all, and all computations involving cohomology classes will
implicitly be with
respect to these bases unless explicitly noted to the contrary.

\subsection{Centralizers in right-angled Artin groups}

Recall that a graph $J$ is called a \emph{join} if its complement is disconnected. Equivalently, there are two nonempty subgraphs
$J_1$ and $J_2$  of $J$ which partition the vertices of $J$, and such that every vertex in $J_1$ is adjacent to every vertex in $J_2$.
We write $J=J_1*J_2$.

Let $\gam$ be a finite  simplicial graph and let $1\neq x\in A(\gam)$ be a nontrivial element, which is expressed as a word
in the vertices $\{v_1,\ldots,v_{|\mathrm{Vert}(\gam)|}\}$ of $\gam$ and their inverses. We say that $x$  is \emph{reduced} if
it is freely reduced with respect to the operation of  commuting adjacent vertices. That is, $x$ cannot be shortened by applying moves
of the form:
\begin{itemize}
\item
Free reduction: $\cdots a\cdot v_i^{\pm1}v_i^{\mp1}\cdot b\cdots\longrightarrow \cdots a\cdot b\cdots$;
\item
Commutation of adjacent vertices: $\cdots v_i^{\pm1}v_j^{\pm 1}\cdots\longrightarrow \cdots v_j^{\pm 1}v_i^{\pm1}\cdots$,
provided $\{v_i,v_j\}$ spans an edge of $\gam$.
\end{itemize}
An element of $A(\gam)$ is nontrivial if and only if it cannot be reduced to the identity via applications of these two
moves~\cite{cartier-foata,cgw2009,hm1995}.
We say that $x$ is \emph{cyclically reduced} if all cyclic
permutations of $x$ are also  freely reduced.
The centralizer of $x$ is described by a
theorem of Servatius~\cite{Servatius1989}:

\begin{thm}\label{thm:cent}
Suppose that $x$ is nontrivial, cyclically reduced, and  has non-cyclic centralizer.
Then there is a join $J=J_1*J_2*\cdots * J_n\subset \gam$ such that $x\in A(J)<A(\gam)$, and such that $J_i$  does not decompose
as a nontrivial join for $1\leq i\leq n$. Moreover:
\begin{enumerate}
\item
The element $x$ can be  uniquely represented as a product $x_1x_2\cdots x_n$ where $x_i\in A(J_i)$.
\item
Up to re-indexing, the centralizer of $x$  is given by \[\Z^k\times A(J_{k+1})\times\cdots\times A(J_n),\] where $x_i$ is nontrivial for
$i\leq k$ and trivial for $i>k$.
\end{enumerate}
\end{thm}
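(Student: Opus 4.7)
The plan is to break the argument into three moves: canonicalize the letters of $x$, translate a graph join into a direct product, and then compute centralizers factor by factor.

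First I would define the \emph{support} $\supp(x) \subset \mathrm{Vert}(\gam)$ of a cyclically reduced element $x$ as the set of vertices appearing in any reduced word representing $x$. Well-definedness follows from the solution of the word problem in right-angled Artin groups: any two reduced expressions for $x$ are connected by a sequence of commutation moves on adjacent letters, and such moves preserve the multiset of letters. Let $\Lambda \subset \gam$ be the full subgraph on $\supp(x)$. Since $A(\Lambda)$ is a retract of $A(\gam)$, it embeds, and by construction $x \in A(\Lambda)$.

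Next I would extract the join structure. Decompose $\Lambda$ maximally as a join $\Lambda = J_1 * J_2 * \cdots * J_n$, where each $J_i$ is itself not a nontrivial join; this decomposition exists and is unique up to reordering because the $J_i$ correspond to the connected components of the complement graph of $\Lambda$. Joins translate into direct products of right-angled Artin groups, so $A(\Lambda) \cong A(J_1) \times \cdots \times A(J_n)$. The factorization of $x$ through this product yields the unique expression $x = x_1 x_2 \cdots x_n$ with $x_i \in A(J_i)$, proving part (1). I would re-index so that $x_i \neq 1$ precisely for $i \leq k$.

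For part (2), the first step is to show that any element of $A(\gam)$ centralizing $x$ already lies in $A(\Lambda)$: a cyclic reduction of such a centralizer $g$ must consist of letters that commute with every letter of $\supp(x)$, and absorbing the pieces of $g$ that lie in the common link of $\supp(x)$ --- which, because joins of the $J_i$ are already built into $\Lambda$, forces those pieces inside $A(\Lambda)$ --- gives $g \in A(\Lambda)$. The direct product structure then splits centralizers as $C(x) = \prod_i C_{A(J_i)}(x_i)$. For $i > k$, triviality of $x_i$ gives $C_{A(J_i)}(x_i) = A(J_i)$, which accounts for the right-hand factors in the stated formula.

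The main obstacle is the remaining claim: for $x_i \neq 1$ and $J_i$ non-join, $C_{A(J_i)}(x_i) \cong \Z$. I would take a nontrivial cyclically reduced $y$ commuting with $x_i$ inside $A(J_i)$ and first argue that $\supp(y) = J_i$; otherwise, the commutation $[x_i,y]=1$, tracked through the well-defined support, forces every vertex of $\supp(y)$ to be adjacent to every vertex of $\supp(x_i) \setminus \supp(y)$, exposing a proper join decomposition of $J_i$ and contradicting its irreducibility. Once $\supp(y)=\supp(x_i)=J_i$, a CAT(0) argument on the Salvetti complex finishes the job: both $x_i$ and $y$ act as hyperbolic isometries on the universal cover of $S(J_i)$, their commutation forces parallel axes, and the stabilizer of a common axis in a RAAG on a non-join graph is infinite cyclic, so $x_i$ and $y$ are powers of a common root. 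This delivers the $\Z^k$ factor and completes (2).
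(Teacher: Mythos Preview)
The paper does not prove this statement at all: it is quoted verbatim as Servatius's Centralizer Theorem with a citation to~\cite{Servatius1989}, so there is no ``paper's own proof'' to compare against. That said, your sketch has a genuine gap.

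The error is in your choice of $J$. You set $\Lambda$ to be the full subgraph on $\supp(x)$ and then assert that every $g\in A(\gam)$ centralizing $x$ already lies in $A(\Lambda)$. This is false. Take $x=v$ a single vertex generator: then $\supp(x)=\{v\}$ and $A(\Lambda)\cong\Z$, yet every vertex of $\lk(v)$ commutes with $x$, so the centralizer is $\langle v\rangle\times A(\lk(v))$, which is not contained in $A(\Lambda)$. Your justification --- that pieces of $g$ in the common link of $\supp(x)$ are ``already built into $\Lambda$'' because of the join structure of $\Lambda$ --- confuses the internal join decomposition of $\Lambda$ with the external link of $\Lambda$ in $\gam$; the latter is exactly what you are missing. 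The correct $J$ in Servatius's theorem is (the full subgraph on) $\supp(x)\cup\lk(\supp(x))$, where $\lk(\supp(x))$ denotes the set of vertices adjacent to every vertex of $\supp(x)$; the factors $J_{k+1},\ldots,J_n$ with $x_i=1$ are precisely the irreducible join components of this link, and they carry the non-cyclic part of the centralizer.

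A secondary issue: in your last paragraph you claim that $[x_i,y]=1$ forces every vertex of $\supp(y)$ to be adjacent to every vertex of $\supp(x_i)\setminus\supp(y)$. This is the substantive combinatorial content of Servatius's argument and does not follow ``through the well-defined support'' without real work; commutation of two elements in a RAAG does not in general imply letter-by-letter commutation of their supports. The CAT(0) axis argument you invoke afterwards is morally right, but it presupposes the support statement you have not established.
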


Let $J=J_1*J_2*\cdots * J_n$ be a join and let $v$ be a vertex in $J_1$. Then $v$ is  adjacent to each vertex of $J_i$ for $i\geq  2$,
whence it follows that  the valence of $v$ is at least \[\sum_{i=2}^n |J_i|.\] The following consequence is now straightforward:

\begin{cor}\label{cor:valence}
Let $N$ denote the maximum valence  of a vertex in $\gam$ and let $R(x)$ denote the rank of the centralizer of a
nontrivial element of $x\in A(\gam)$. Then \[N+1=\max_{1\neq x\in A(\gam)} R(x).\]
\end{cor}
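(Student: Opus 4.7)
The plan is to prove the two inequalities $\max_{x} R(x) \ge N+1$ and $\max_{x} R(x) \le N+1$ separately, in each case reading the bound off Theorem~\ref{thm:cent}.

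For the lower bound, take a vertex $v \in \gam$ of maximum valence $N$. Since $v$ commutes with every vertex of $\lk(v)$, the star $\st(v) = \{v\} \cup \lk(v)$ is a join $\{v\} * \lk(v)$, and further factoring $\lk(v) = L_1 * \cdots * L_m$ into non-join pieces yields the decomposition of Theorem~\ref{thm:cent} for the cyclically reduced element $v$, with $k = 1$. The centralizer of $v$ is then $\Z \times A(L_1) \times \cdots \times A(L_m)$, whose rank, read off the abelianization, equals $1 + |\lk(v)| = N + 1$. (If $v$ happens to be isolated then $N=0$ and $R(v)=1$, and the formula still holds.)

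For the upper bound, let $1 \ne x \in A(\gam)$. Since the rank of the centralizer is invariant under conjugation, we may replace $x$ with a cyclically reduced conjugate; if its centralizer is cyclic then $R(x) = 1 \le N+1$ trivially. Otherwise Theorem~\ref{thm:cent} produces a join $J = J_1 * \cdots * J_n \subset \gam$, each $J_i$ non-join, together with a factorization $x = x_1 \cdots x_n$ in which $x_i$ is nontrivial exactly for $i \le k$, and a centralizer $\Z^k \times A(J_{k+1}) \times \cdots \times A(J_n)$. Its abelianization has rank
\[
R(x) = k + \sum_{i=k+1}^{n} |J_i| = k + |J| - \sum_{i=1}^{k} |J_i|.
\]
Using $|J_i| \ge 1$ for each $i$, we have $\sum_{i=1}^k |J_i| \ge |J_1| + (k-1)$, so $R(x) \le 1 + |J| - |J_1|$. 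On the other hand, the join property forces every vertex of $J_1$ to be adjacent in $\gam$ to every vertex of $J_2 \cup \cdots \cup J_n$, so its $\gam$-valence is at least $|J| - |J_1|$; hence $N \ge |J| - |J_1|$, and combining the two estimates yields $R(x) \le N+1$.

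Taken together the two bounds give $\max_{x \ne 1} R(x) = N+1$. The only nontrivial piece of bookkeeping is the arithmetic reduction $k + \sum_{i>k}|J_i| \le 1 + |J| - |J_1|$, which isolates the quantity $|J|-|J_1|$ directly controlled by the join condition; I do not expect a genuine obstacle, since the decisive input is Theorem~\ref{thm:cent}.
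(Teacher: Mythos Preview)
Your argument is correct and matches the paper's approach: the paper records precisely the observation that a vertex of $J_1$ has valence at least $\sum_{i\ge 2}|J_i|$ and then declares the corollary ``straightforward,'' and your proof is the natural way to fill in those details using Theorem~\ref{thm:cent}. The only point worth noting is that in the lower bound you are implicitly using that the join $J$ furnished by Theorem~\ref{thm:cent} for $x=v$ is $\st(v)$; this is the standard form of Servatius' theorem for a vertex, and in any case the inequality $R(v)\ge N+1$ already follows since $A(\st(v))\subset C(v)$ surjects onto a rank $N+1$ summand of $A(\gam)^{ab}$.
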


In Corollary~\ref{cor:valence}, the \emph{rank} of a group is the minimal cardinality of a set of generators.

\begin{rem}
Note that Corollary~\ref{cor:valence} gives an intrinsic bound on valence of vertices in the defining graph of a right-angled Artin group
without any reference to a set of generators.
\end{rem}

\subsection{Centralizers and $q$--valence}

Let $L$ be  a  fixed field. In this subsection we prove the following linear algebraic version of valence in a graph:

\begin{lem}\label{lem:valence}
Let $V=H^1(A(\gam),L)$, let $W=H^2(A(\gam),L)\}$, and let $q$ denote the cup product pairing
\[\smile\colon H^1(A(\gam),L)\times H^1(A(\gam),L)\to H^2(A(\gam),L)\] Then the $q$--valence
$d(V)$ coincides with the maximum valence  of a vertex in $\gam$.
\end{lem}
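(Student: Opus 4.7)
The plan is to prove $d(V) = N$, where $N$ is the maximum valence of $\gam$, by matching $d(V) \leq N$ and $d(V) \geq N$. Throughout, I use the canonical bases from Proposition~\ref{p:coho}: $\{v_1^*, \ldots, v_n^*\}$ for $V$ and $\{e^*\}_{e \in \mathrm{Edge}(\gam)}$ for $W$, with $v_i^* \smile v_j^* = \pm e^*$ whenever $\{v_i, v_j\}$ is an edge and zero otherwise.

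For the upper bound I would take both $S$ and $B$ to be the canonical basis $\{v_i^*\}$. Then for each $i$, the set $\{b \in B : q(v_i^*, b) \neq 0\}$ is exactly $\{v_j^* : v_j \sim v_i\}$, whose cardinality is $\deg(v_i) \leq N$. So $d_B(S) = N$, and hence $d(V) \leq N$.

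The real work is the lower bound. Let $v_0$ be a vertex of maximum valence $N$, and let $S$ be any spanning set of $V$ and $B$ any basis. Since $S$ spans $V$, at least one $s \in S$ has nonzero coefficient $a_0$ on $v_0^*$ in its canonical expansion $s = \sum a_i v_i^*$; otherwise $S \subset \mathrm{span}\{v_j^* : j \neq 0\}$, a proper subspace. For this $s$ I aim to show $\mathrm{rank}\, q(s, \cdot) \geq N$. Once this is established, the inequality $|\{b \in B : q(s,b) \neq 0\}| \geq \mathrm{rank}\, q(s,\cdot)$, which holds for any basis $B$ because the nonzero values $q(s,b)$ for $b \in B$ span the image of $q(s, \cdot)$, yields $d_B(S) \geq N$ at once.

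To bound the rank, I inspect the $N$ vectors $q(s, v_j^*)$ with $v_j \in N(v_0)$. Using the cup-product formula of Proposition~\ref{p:coho},
\[q(s, v_j^*) = \pm a_0\, e_{\{v_0, v_j\}}^* + u_j,\]
where $u_j$ is a linear combination of edge duals $e_{\{v_i, v_j\}}^*$ with $i \neq 0$. The pivot term is nonzero since $a_0 \neq 0$, and each $e_{\{v_0, v_j\}}^*$ appears in exactly one of these $N$ vectors. Projecting onto the span of the $N$ pivot vectors $\{e_{\{v_0, v_j\}}^* : v_j \in N(v_0)\}$ forces linear independence, so $\mathrm{rank}\, q(s, \cdot) \geq N$. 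The main obstacle is precisely this lower bound: since the spanning set $S$ and the basis $B$ are essentially arbitrary, one cannot directly consult the graph combinatorics; the right conceptual move is to single out an element of $S$ whose canonical expansion detects the max-valence vertex $v_0$, and then run a pivot argument using the edge duals incident to $v_0$. Combining the two inequalities yields $d(V) = N$.
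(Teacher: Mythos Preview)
Your proof is correct and follows essentially the same route as the paper's: take $B=S=\{v_i^*\}$ for the upper bound, and for the lower bound pick $s\in S$ with nonzero coefficient on the dual of a max-valence vertex, show that the images $q(s,v_j^*)$ for $v_j$ adjacent to that vertex are linearly independent, and conclude via the rank of $q(s,\cdot)$ that any basis $B$ must contain at least $N$ elements pairing nontrivially with $s$. Your explicit pivot argument on the edge duals $e_{\{v_0,v_j\}}^*$ in fact spells out the step the paper leaves implicit when it asserts that linear independence of $\{q(v_1^*,w_i^*)\}$ implies that of $\{q(s,w_i^*)\}$.
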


\begin{proof}
We write $d(\gam)$ for the maximum valence of a vertex in $\gam$. Let \[B=S=\{v_1^*,\ldots,v_{|\mathrm{Vert}(\gam)|}^*\}\] be the basis for $V$
furnished by Proposition~\ref{p:coho}. Then clearly \[d(\gam)=\max_{s\in S} |\{b\in B\mid q(s,b)\neq 0\}|,\] whence it follows that
$d(V)\leq d(\gam)$.

We now consider the reverse inequality. Note first that we need only consider sets $S$ which  are bases for $V$, since if $B$ is fixed and
if $S\subset S'$ then $d_B(S)\leq d_B(S')$.

Let $S$ be an arbitrary basis for $V$, and let $v_1$ be the vertex of $\gam$ with  highest valence. If $s\in S$ then we may write $s$ in
terms of the basis $\{v_1^*,\ldots,v_{|\mathrm{Vert}(\gam)|}^*\}$. Since $S$ forms a basis for $V$, there is some $s\in S$ such that the corresponding
coefficient for $v_1^*$ is nonzero. We  fix  such an $s$  for the remainder of the proof.

Write $\{w_1,\ldots,w_k\}$ for the vertices of $\gam$ which are adjacent to $v_1$, with corresponding duals $\{w_1^*,\ldots,w_k^*\}$, and
let $B$ be another arbitrary basis for $V$. Observe first that $q(v_1^*,w_i^*)\neq 0$ for $\{1\leq i\leq k\}$. Moreover, the set
\[\{q(v_1^*,w_i^*)\}_{1\leq i\leq k}\]  is linearly independent in  $W$. It follows that
the set \[\{q(s,w_i^*)\}_{1\leq i\leq k}\] is linearly independent in  $W$.

Thus, we may consider the linear map \[q_s\colon V\to W\] given by $q_s(v)=q(s,v)$.  Clearly this is a linear map and its image is a
vector subspace of $W$. The considerations of the previous paragraph show that the dimension of $q_s(V)$ is at least $k$, which
coincides with the valence of $v_1$ and hence
with $d(\gam)$. Suppose that there were fewer than $k$ elements $b\in B$ for which
$q(s,b)\neq 0$. Then $q_s(B)\subset W$ would span a subspace of dimension strictly less than $k$. However, $B$ is a basis, so that the
span of $q_s(B)$ coincides with $q_s(V)$, which is a contradiction. Thus, we have that  $d_B(S)\geq d(\gam)$. Since $B$ and $S$ were
arbitrary, we have $d(V)\geq d(\gam)$.
\end{proof}

\subsection{Pairing--connectedness}

In this subsection, we show that pairing--connectedness, which was already shown to be implied by positive Cheeger constant $h_V>0$
by Proposition~\ref{prop:cheeger-conn},
is equivalent to the connectedness of $\gam$ under the assumptions \[V=H^1(A(\gam),L),\quad W=H^2(A(\gam),L),\quad q=\smile.\]

\begin{lem}\label{lem:connected}
Let $\gam$ be a finite simplicial graph, let $V=H^1(A(\gam),L)$, and let $q$ be the cup product pairing on $V$. The
vector space $V$ is pairing--connected
if and only if the graph $\gam$ is connected.
\end{lem}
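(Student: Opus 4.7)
The plan is to prove both implications by exploiting the explicit basis from Proposition~\ref{p:coho} that matches the cohomology of $A(\gam)$ with the combinatorics of $\gam$.

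For the easier direction, suppose $\gam$ is disconnected and partition $\mathrm{Vert}(\gam) = A \sqcup B$ so that no edge of $\gam$ crosses between $A$ and $B$. Setting $F_A = \mathrm{span}\{v_i^* : v_i \in A\}$ and $F_B = \mathrm{span}\{v_i^* : v_i \in B\}$ gives a nontrivial splitting $V = F_A \oplus F_B$. Proposition~\ref{p:coho}(3) forces $v_i^* \smile v_j^* = 0$ whenever $v_i \in A$ and $v_j \in B$, so bilinearity gives $q(F_A, F_B) = 0$, witnessing the failure of pairing-connectedness.

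For the converse, assume $\gam$ is connected and, aiming at a contradiction, suppose $V = V_0 \oplus V_1$ is a nontrivial decomposition with $q(V_0, V_1) = 0$. I would fix bases $\{u^{(l)}\}_{l=1}^k$ of $V_0$ and $\{w^{(l')}\}_{l'=1}^m$ of $V_1$, expand $u^{(l)} = \sum_i a_i^{(l)} v_i^*$ and $w^{(l')} = \sum_i b_i^{(l')} v_i^*$, and for each vertex $v_i$ package the coefficients into vectors $\alpha_i = (a_i^{(l)})_l \in L^k$ and $\beta_i = (b_i^{(l')})_{l'} \in L^m$. By Proposition~\ref{p:coho}(3) together with the antisymmetry of $\smile$ on $H^1$, the vanishing of every $q(u^{(l)}, w^{(l')})$ translates, via coefficient comparison on each basis element $e_{ij}^* \in H^2$, into the outer-product identity
\[
\alpha_i \beta_j^T = \alpha_j \beta_i^T \qquad \text{for every edge } \{v_i, v_j\} \in \mathrm{Edge}(\gam),
\]
viewed as an equality of $k \times m$ matrices.

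The proof then proceeds in two stages. First, every vertex $v_i$ must have $\alpha_i \neq 0$ or $\beta_i \neq 0$, since $v_i^*$ has a $1$ in position $i$ and decomposes into $V_0 \oplus V_1$. Using the identity above, the set $T = \{i : \alpha_i = 0\}$ is closed under adjacency in $\gam$: if $i \in T$ then $\beta_i \neq 0$, and for any neighbor $j$ the identity forces $\alpha_j \beta_i^T = 0$, hence $\alpha_j = 0$. Connectedness of $\gam$ together with $V_0 \neq 0$ then forces $T = \emptyset$, and symmetrically every $\beta_i \neq 0$. Second, with all coefficient vectors nonzero, each edge identity is an equality of nonzero rank-one matrices, which forces $\alpha_i \parallel \alpha_j$ and $\beta_i \parallel \beta_j$. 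Propagating proportionality along paths in the connected graph $\gam$, there exist fixed nonzero $\alpha \in L^k$ and $\beta \in L^m$ and nonzero scalars $\lambda_i, \mu_i$ with $\alpha_i = \lambda_i \alpha$ and $\beta_i = \mu_i \beta$. Every $u^{(l)}$ is then a scalar multiple of $\sum_i \lambda_i v_i^*$, forcing $\dim V_0 = 1$, and similarly $\dim V_1 = 1$. Finally, the identity collapses to $\lambda_i \mu_j = \lambda_j \mu_i$, so the tuples $(\lambda_i)$ and $(\mu_i)$ are proportional, making $V_0 = V_1$ and contradicting $V_0 \cap V_1 = 0$.

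The main obstacle is packaging all the vanishing constraints $q(u^{(l)}, w^{(l')}) = 0$ as the single rank-one outer-product equation $\alpha_i \beta_j^T = \alpha_j \beta_i^T$; once that reformulation is in place, connectedness of $\gam$ cleanly rules out both the support-separation scenario (some $\alpha_i$ or $\beta_i$ vanishes, forcing $V_0$ or $V_1$ to be zero) and the residual proportional-lines scenario ($V_0$ and $V_1$ both one-dimensional and parallel).
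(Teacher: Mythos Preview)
Your proof is correct. The easy direction matches the paper's verbatim. For the converse, the paper takes a more element-by-element route: it fixes a single vector $w_0\in V_0$ with a nonzero coefficient at some vertex $b_1$, walks along a spanning sequence $b_1,b_2,\ldots,b_m$ of adjacent vertices, and argues inductively that every $w_1\in V_1$ must have vanishing coefficient at each $b_i$, whence $V_1=0$. The inductive engine is the same edge constraint $\lambda_i\mu_j=\lambda_j\mu_i$ you isolate, but the paper disposes of the nonzero case by observing that the resulting proportionality on the $(i,j)$ coefficient pair cannot hold for all vectors in $V_0\cup V_1$ since $V_0+V_1=V$. Your approach is genuinely different in organization: by fixing bases of $V_0$ and $V_1$ at once and packaging all the constraints as the outer-product identity $\alpha_i\beta_j^T=\alpha_j\beta_i^T$, you get a clean two-phase argument (first, the zero-locus of $\alpha$ is edge-closed and hence empty by connectedness; second, equality of nonzero rank-one matrices forces parallelism, collapsing both summands to coincident lines). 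The paper's version is a touch more elementary in that it never names a basis of $V_0$ or $V_1$ and needs no matrix language, but its inductive step is more delicate to state precisely; your version is more structural, makes the role of connectedness transparent, and as a bonus shows directly that no nontrivial orthogonal splitting exists even before forcing a summand to vanish.
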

\begin{proof}
Let $\{v_1,\ldots,v_n\}$ be the vertices of $\gam$, so that the  dual  vectors $\{v_1^*,\ldots,v_n^*\}$ form a  basis for $V$. Suppose
that  $\gam$ is  not  connected.  Then after reindexing, we may write $B_0=\{v_1^*,\ldots,v_j^*\}$ and $B_1=\{v_{j+1}^*,\ldots,v_n^*\}$ with
$j<n$, and where there  is  no edge in $\gam$ of the form $\{v_i,v_k\}$ with $i\leq j$ and $k>j$. We  let $V_0$ be the span of $B_0$  and
$V_1$  be the span of $B_1$. Note  that  $V=V_0\oplus V_1$.
It  is clear that  if $w_0\in V_0$ and $w_1\in V_1$ then $q(w_0,w_1)=0$, so that $V$  is not pairing--connected.

Conversely, suppose  that $\gam$  is  connected, and  suppose  that $V\cong V_0\oplus V_1$ is an arbitrary  nontrivial  direct sum
decomposition. We assume for a contradiction that for all pairs $w_0\in V_0$  and $w_1\in V_1$, we have $q(w_0,w_1)=0$.
We argue by induction that either $V_0=0$ or $V_1=0$,
using a sequence $\{b_1,\ldots,b_m\}$ of vertices of $\gam$, such that each vertex  of $\gam$
appears in this sequence, and such that for all  $i<m$  we  have $\{b_i,b_{i+1}\}$  spans  an edge of $\gam$.  We  write  $b_i^*\in V$ for  the
vector dual to  the  vertex $b_i$.
Note  that it is  possible for there
to be repeats  on the list $\{b_1,\ldots,b_m\}$, since $\gam$ may not contain a Hamiltonian path.

Before starting the induction, we explain the  inductive  step.  Let  $w_0\in V_0$ and $w_1\in V_1$, and write
\[w_0=\sum_{i=1}^n\lambda_i v_i^*,\quad w_1=\sum_{i=1}^n\mu_i v_i^*.\] Suppose that $\{v_i,v_j\}$ spans an edge in $\gam$. By
expanding the cup  product $q(w_0,w_1)=0$, we see  that we must have $\lambda_i\mu_j=\lambda_j\mu_i$. If these products  are nonzero,
it follows that the pairs $(\lambda_i,\lambda_j)$ and  $(\mu_i,\mu_j)$ must  satisfy a proportionality relation
 (i.e. there is a nonzero $\alpha$
such that $(\lambda_i,\lambda_j)=(\alpha\mu_i,\alpha\mu_j)$).
The vector space
$V$ is  a free $L$--module on $\{v_1^*,\ldots,v_n^*\}$, so  that there  are vectors in $V$ whose  coefficients do not satisfy
this  proportionality  relation.
Therefore there exist vectors
\[w_0'=\sum_{i=1}^n\lambda_i'v_i^*\in V_0\quad\textrm{or}\quad w_1'=\sum_{i=1}^n\mu_i'v_i^*\in V_1\]
such that $(\lambda_i',\lambda_j')$ is not
proportional to $(\lambda_i,\lambda_j)$ or $(\mu_i',\mu_j')$ is not  proportional to
$(\mu_i,\mu_j)$. Indeed, since $V$  is spanned  by $V_0$ and $V_1$, if  there were  no such vectors in both $V_0$
and $V_1$  then every  vector in $V$  would satisfy this proportionality relation, which is not  the case.
We then see that either $q(w_0',w_1)\neq 0$ or  $q(w_0,w_1')\neq 0$, which contradicts the assumption that
$q(w_0,w_1)=0$  for all $w_0\in V_0$ and $w_1\in  V_1$. It follows that  $\lambda_i\mu_j=\lambda_j\mu_i=0$.

We can now begin  the induction. Suppose that $w_0\in V_0$ is expressed with respect  to the basis $\{v_1^*,\ldots,v_n^*\}$. After
relabeling, we may assume $v_1=b_1$ and $v_2=b_2$.
Assume  that the
coefficient $\lambda_1$ of $v_1^*=b_1^*$ is nonzero; if no such vector exists then we simply choose one in $V_1$ and proceed in the
following argument with the roles of $V_0$ and $V_1$ switched.
 Let $w_1\in V_1$  be  similarly  expressed, and suppose that the coefficient
$\mu_2$ corresponding to $b_2^*$  is nonzero. Then we must have $\lambda_1\mu_2=\lambda_2\mu_1$,  and these  products  are
both  nonzero. The argument  of the inductive step shows  that since  $V_0\oplus V_1=V$, we  cannot  have
$\lambda_1\mu_2=\lambda_2\mu_1\neq 0$. It follows  that  $\mu_2=0$. Since $w_1$ was  arbitrary, the vanishing  of this coefficient
 holds for  all vectors  in $V_1$. Again using the fact that $V_0$ and $V_1$ span $V$, there is a  vector $w_0'\in V_0$ which has
 a nonzero  coefficient $\lambda_2'$  for $b_2^*$. Arguing symmetrically  shows that the  coefficient $\mu_1$ of $b_1^*$ vanishes
 for all vectors in $V_1$.

 By induction on $m$  and using  the  fact that  each vertex of $\gam$ occurs on the list $\{b_1,\ldots,b_m\}$,
 it follows that if $w_1\in V_1$ then  all coefficients  of $w_1$ with respect to the basis $\{v_1^*,\ldots,v_n^*\}$ vanish,
 so that $V_1$ is  the zero  vector space. This contradicts  the assumption that $V\cong V_0\oplus V_1$  was  a nontrivial direct  sum
 decomposition.
\end{proof}

\section{Graph and vector space Cheeger constants}\label{sec:cheeger}
In this section we show that a vector space equipped with a vector space valued bilinear pairing can compute
the Cheeger constant of a graph, which will allow us to establish Theorem~\ref{thm:main}  and  its consequences.

\subsection{Comparing Cheeger constants}

The  main technical result of this section is the following, which provides a precise correspondence  between Cheeger constants in the
combinatorial and linear algebraic contexts:

\begin{thm}\label{lem:cheeger}
Suppose that $\gam$ is a connected simplicial graph and let $A(\gam)$
 be the corresponding right-angled Artin group. Let $h_{\gam}$ denote the
Cheeger constant of $\gam$, and let $h_V$ denote the Cheeger constant of the triple $(V,W,q)$, where $V=H^1(A(\gam),L)$, where
$W=H^2(A(\gam),L)$, and where $q$ denotes the cup product. Then $h_{\gam}=h_V$.
\end{thm}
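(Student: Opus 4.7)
The plan is to prove the two inequalities $h_V \leq h_{\gam}$ and $h_V \geq h_{\gam}$ separately, using the distinguished vertex basis $\{v_i^*\}$ of $V = H^1(A(\gam),L)$ from Proposition~\ref{p:coho} as the bridge between the combinatorial and linear-algebraic sides.

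For $h_V \leq h_{\gam}$, I would take a vertex subset $A \subset \mathrm{Vert}(\gam)$ realizing $h_{\gam}$ and let $F$ be the coordinate subspace spanned by $\{v_i^* : v_i \in A\}$. Using $v_i^* \smile v_j^* = \pm e_{ij}^*$ on edges (and zero off edges), a direct unwinding identifies $C = F^\perp$ as the span of the duals of vertices having no neighbor in $A$, and $C \cap F$ as the span of those $v_i^*$ for which $v_i \in A$ has no neighbor in $A$. Substituting into the Cheeger formula, the numerator $\dim V - \dim F - \dim C + \dim(C \cap F)$ collapses to exactly $|\partial A|$, giving $h_F = |\partial A|/|A| = h_{\gam}$, and hence $h_V \leq h_{\gam}$.

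For the reverse inequality, the goal is to show $h_F \geq h_{\gam}$ for every $F$ with $0 < \dim F \leq (\dim V)/2$. The natural tool is a local characterization of the orthogonal complement: writing elements of $V$ as functions on $\mathrm{Vert}(\gam)$, one has $g \in C$ if and only if for every edge $e = \{v_a,v_b\}$ the pair $(g(v_a),g(v_b))$ is proportional to $(f(v_a),f(v_b))$ for every $f \in F$. Since $h_F = \dim(V/(F+C))/\dim F$, this reduces the problem to an edge-by-edge control of $F+C$ in terms of the restrictions $F|_e \subset L^2$. The intended strategy is then to associate to $F$ a vertex subset $A_F$ with $|A_F| \leq (\dim V)/2$ and $|\partial A_F|/|A_F| \leq h_F$, so that $h_{\gam} \leq h_{A_F} \leq h_F$, with the canonical first candidate for $A_F$ being the pivot vertices of a reduced row echelon basis of $F$ and the proposed witness classes being $\{v^* + (F+C) : v \in \partial A_F\}$.

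The main obstacle is that this naive pivot choice is genuinely too crude: for instance, $F = \mathrm{span}(v_1^* + v_3^*, v_4^*)$ inside the path on five vertices has pivots $\{v_1,v_4\}$, yielding $|\partial A|/|A| = 3/2$, whereas direct computation gives $h_F = 1$, and indeed $v_3^* \in F + C$ so the class of $v_3$ is killed in the quotient. A finer construction of $A_F$ will be needed, obtained either by first modifying the basis of $F$ to align its support with the graph structure before reading off pivots, or by a global variational argument showing that the infimum defining $h_V$ is attained at a coordinate subspace (for which the first part of the proof already matches the graph Cheeger constant). Unwinding this interplay between the linear-algebraic structure of $F$, the combinatorics of $\gam$, and the cup-product pairing is where the technical heart of the theorem lies, and where I would expect to spend the bulk of the effort.
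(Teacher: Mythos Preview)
Your plan matches the paper's architecture exactly. The easy inequality $h_V\le h_\gam$ is precisely the paper's Lemma~\ref{lem:special}, and your computation there is correct. For the hard inequality the paper also proceeds by producing, for each $F$, a \emph{coordinate} subspace $F'$ of the same dimension with $h_{F'}\le h_F$ (Lemma~\ref{lem:compare}); so your overall strategy is right, and your diagnosis that naive row-echelon pivots are insufficient is correct and well illustrated by your $P_5$ example.

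The gap is that you stop at the diagnosis. The paper's actual construction of $F'$ is the content of Lemma~\ref{lem:compare-ref} and its supporting Lemmata~\ref{lem:iso-pair}--\ref{lem:k+1-still}, and it hinges on an idea your proposal does not yet contain: among all possible pivot sets (``admissible tuples'' $E^*$), one chooses one that \emph{minimizes the number $|E_0|$ of isolated vertices in the induced subgraph $\gam_E$}. In your $P_5$ example the admissible pivot sets are $\{v_1,v_4\}$ (with $|E_0|=2$) and $\{v_3,v_4\}$ (with $|E_0|=0$); the paper's rule selects $\{v_3,v_4\}$, giving $h_{F'}=|\partial\{v_3,v_4\}|/2=1=h_F$ on the nose. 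After this choice, the paper shows (via an inductive basis-modification procedure, swapping pivot columns when needed) that any $x\in C$ decomposes as an element of $C\cap F$ plus an element of $C'\cap Y$, which yields $\dim C-\dim(C\cap F)\le \dim C'-\dim(C'\cap F')$ and hence $h_{F'}\le h_F$. Neither of your two suggested fixes---generic ``alignment with graph structure'' or a variational argument that the infimum is attained on coordinate subspaces---captures this; the first is too vague to carry the argument, and the second would have to be proved and is essentially equivalent to the statement you are trying to establish. To complete your proof you will need to supply a concrete pivot-selection rule (the $|E_0|$--minimization is the natural one) together with the dimension inequality above.
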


The  proof of Theorem~\ref{lem:cheeger} is rather involved, and so will be broken up into several more manageable lemmata. We begin
by proving that the Cheeger constant associated to a subspace $F\subset V$ generated  by duals of the vertex generators is given by
the Cheeger constant associated to the corresponding subgraph. To fix notation,
let $\{v_1,\ldots,v_n\}$ denote the vertices of $\gam$, and let $\{v_1^*,\ldots,v_n^*\}$ be the corresponding dual generators of $V$.
If $B=\{v_1,\ldots,v_j\}$, we write $B^*=\{v_1^*,\ldots,v_j^*\}$ and use $\partial B$ to denote the vertices $\gam$ which do not lie in $B$
but which are adjacent to vertices in $B$.

\begin{lem}\label{lem:special}
Let $0\neq F\subset V$ be generated by $B^*=\{v_1^*,\ldots,v_j^*\}$. Then \[h_F=\frac{|\partial B|}{|B|}.\]
\end{lem}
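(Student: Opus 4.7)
The plan is to unfold the definition of $h_F$ using the basis furnished by Proposition~\ref{p:coho}, which makes the cup-product pairing $q$ entirely transparent with respect to the combinatorics of $\gam$. Fix the vertex basis $\{v_1^*,\ldots,v_n^*\}$ of $V$ and the edge basis of $W$, so that $v_i^*\smile v_k^*=\pm e^*$ for the edge $\{v_i,v_k\}$ when this pair spans an edge, and vanishes otherwise. For brevity let $N(B)$ denote the set of vertices of $\gam$ adjacent to at least one vertex of $B$, so that $N(B)$ is the disjoint union of $\partial B$ and $B\cap N(B)$.

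First I would make the orthogonal complement $C$ explicit. A vector $w=\sum_k\mu_k v_k^*$ lies in $C$ if and only if $q(v_i^*,w)=0$ for every $v_i\in B$. Expanding, each such equation is a linear combination of basis elements $e^*$ corresponding to edges incident to $v_i$; these edges are pairwise distinct, so linear independence in $W$ forces $\mu_k=0$ whenever $v_k\in N(B)$. Consequently $C$ is the coordinate subspace spanned by $\{v_k^*\mid v_k\notin N(B)\}$, and $\dim C=n-|N(B)|$.

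Next I would compute $\dim(C\cap F)$. Because $C$ and $F$ are both coordinate subspaces with respect to the fixed basis, their intersection is spanned by $\{v_i^*\mid v_i\in B\setminus N(B)\}$, that is, by duals of vertices of $B$ with no neighbor inside $B$. Hence $\dim(C\cap F)=|B|-|B\cap N(B)|$. Substituting into
\[h_F=\frac{\dim V-\dim F-\dim C+\dim(C\cap F)}{\dim F}\]
and using the decomposition $|N(B)|=|\partial B|+|B\cap N(B)|$, the $n$'s and the $|B\cap N(B)|$ terms cancel, leaving precisely $|\partial B|/|B|$.

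The main obstacle is really only careful bookkeeping: one must separate the vertices of $B$ that are adjacent to other vertices of $B$ from those that are not, and verify that the simplicial hypothesis (no self-loops, so $v_i^*\smile v_i^*=0$ causes no anomaly) keeps the counting clean. No conceptual difficulty arises beyond a direct translation between the cup-product structure of $H^*(A(\gam),L)$ and the adjacency relation in $\gam$.
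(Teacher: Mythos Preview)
Your proof is correct and follows essentially the same approach as the paper: both identify $C$ explicitly as the coordinate subspace spanned by duals of vertices not adjacent to any vertex of $B$ (the paper phrases this as ``$y_i\notin B\cup\partial B$ or $y_i$ is an isolated vertex of $B$'', which is exactly your $v_k\notin N(B)$), compute $\dim(C\cap F)$ as the number of isolated vertices of $B$, and substitute. Your use of the $N(B)$ notation and the decomposition $|N(B)|=|\partial B|+|B\cap N(B)|$ streamlines the bookkeeping slightly, but the argument is the same.
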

\begin{proof}
Recall that we use the notation $C$ for the orthogonal complement of $B^*$ with respect to $q$.
The subspace  $C\subset V$ is  generated by
vertex duals $\{y_1^*,\ldots,y_m^*\}$, where for  each $i$ either $y_i\notin B\cup\partial B$ or $y_i$ is an isolated vertex  of $B$ (i.e.
$y_i$ is not adjacent to any other vertex of  $B$).

To see this, note that $\{y_1^*,\ldots,y_m^*\}\subset C$. Conversely, suppose that $x\in C$ and write
\[x=a_1v_1^*+\cdots+a_n v_n^*,\] where $a_1\neq 0$. If $v_1$ is adjacent to a vertex $w\in B$ then clearly $q(x,w^*)\neq 0$, since the
resulting cohomology class will be supported on the dual of the edge connecting  $v_1$ and $w$ (see Subsection~\ref{subsec:coho}
for a discussion of the definition of support). It follows that if
$x\in C$ then $v_1$ is either  an isolated
vertex of  $B$ or $v_1\notin B\cup\partial B$.

We now claim that \[h_F=\frac{|\partial B|}{|B|}.\]
To establish this claim, note that $C\cap F$ is generated by the duals $\{v_1^*,\ldots,v_{\ell}^*\}$ of singleton vertices
of $B$. Write $|\partial B|=k$. It follows now that \[\dim C-\dim (C\cap F)=n-|B\cup\partial B|=n-k-j.\]

We thus obtain the string of equalities
\[\frac{|\partial B|}{|B|}=\frac{k}{j}=\frac{n-j-(n-k-j)}{j}=\frac{\dim V-\dim F-\dim C+\dim(C\cap F)}{\dim F}=h_F,\] which  establishes
the lemma.
\end{proof}

The following lemma clearly implies Theorem~\ref{lem:cheeger}.
\begin{lem}\label{lem:compare}
Let $0\neq F\subset V$ be  of dimension $j$. Then there exists a subspace $F'\subset V$ of dimension $j$
with a basis contained in $\{v_1^*,\ldots,v_n^*\}$, and such that $h_{F'}\leq h_F$.
\end{lem}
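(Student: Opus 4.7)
The plan is to reformulate the target inequality and then choose $F'$ as the coordinate subspace spanned by the pivots of a reduced row echelon basis of $F$. First, expanding the definition of $h_F$ and using $\dim(F + C_F) = \dim F + \dim C_F - \dim(F \cap C_F)$, one obtains
\[
j \cdot h_F \;=\; \dim V - \dim(F + C_F).
\]
By Lemma~\ref{lem:special}, any coordinate subspace $F' = \mathrm{span}\{v_\ell^* : \ell \in B'\}$ satisfies $h_{F'} = |\partial B'|/|B'|$. The lemma thus amounts to producing $B' \subset \mathrm{Vert}(\gam)$ with $|B'| = j$ and $\dim(F + C_F) + |\partial B'| \leq \dim V$.

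I would take $B' = P := \{p_1, \ldots, p_j\}$, the pivot positions of a basis $f_1, \ldots, f_j$ of $F$ in reduced row echelon form with respect to $\{v_1^*, \ldots, v_n^*\}$; write $f_m = v_{p_m}^* + \sum_{i \notin P} c_{m, i} v_i^*$ and set $F' = \mathrm{span}\{v_{p_1}^*, \ldots, v_{p_j}^*\}$. The required inequality reduces to the key claim
\[
(F + C_F) \cap \mathrm{span}\{v_\ell^* : \ell \in \partial B'\} \;=\; 0,
\]
since then $\dim(F + C_F) + |\partial B'|$ equals the dimension of their internal sum, which is at most $\dim V$.

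To prove the claim, let $u = \sum_{\ell \in \partial B'} b_\ell v_\ell^*$ lie in $F + C_F$ and write $u = f + c$ with $f = \sum_m d_m f_m$ and $c = \sum_i c_i v_i^* \in C_F$. Matching coefficients of each $v_i^*$ gives $c_{p_m} = -d_m$ for pivots, $c_\ell = b_\ell - \sum_m d_m c_{m, \ell}$ for $\ell \in \partial B'$, and $c_i = -\sum_m d_m c_{m, i}$ for $i$ a non-pivot not in $\partial B'$. Applying $q(f_m, c) = 0$ for each $m$ and expanding via $q(v_a^*, v_b^*) = \pm e_{ab}^*$ when $\{a, b\} \in \mathrm{Edge}(\gam)$ and $0$ otherwise yields one equation for each edge of $\gam$ incident to a pivot: edges $\{p_m, p_n\}$ between two pivots force $c_{p_n} = 0$ (hence $d_n = 0$ whenever $p_n$ is adjacent in $\gam$ to another pivot); edges $\{p_m, \ell\}$ with $\ell \in \partial B'$ give $c_\ell = -d_m c_{m, \ell}$; and edges $\{p_n, i\}$ with $n \neq m$ and $i \notin P$ give $c_{m, i} d_n = 0$.

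The main obstacle is to combine these edge-indexed constraints with the vertex-matching equations to conclude that every $b_\ell = 0$. For $\ell \in \partial B'$ adjacent to a pivot $p_m$ which is non-isolated among the pivots, $d_m = 0$ forces $c_\ell = 0$, and the vertex-matching equation then expresses $b_\ell$ as a sum over isolated pivot indices of terms $d_k c_{k, \ell}$, each of which vanishes via the $\{p_n, i\}$-equations whenever some pivot adjacent to $\ell$ has $d \neq 0$. The delicate subcase is $\ell$ adjacent only to pivots whose coefficients $d_m$ all vanish; here a more global argument involving the edge constraints $c_{m, i} c_j = c_{m, j} c_i$ on edges $\{i, j\}$ between non-pivot vertices, or a careful choice of ordering of $\{v_1^*, \ldots, v_n^*\}$ prior to reducing to row echelon form, is needed. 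Completing this combinatorial propagation step on $\gam$ is the crux of the proof.
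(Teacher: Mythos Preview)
Your reformulation is correct and clean: writing $j\cdot h_F=\dim V-\dim(F+C_F)$ and reducing to the claim $(F+C_F)\cap\mathrm{span}\{v_\ell^*:\ell\in\partial P\}=0$ is exactly the right target, and in fact the paper's Lemma~\ref{lem:compare-ref} (which asserts $C\subset (C\cap F)+(C'\cap Y)$ for a carefully chosen admissible tuple) directly implies this intersection claim for that particular choice of $P$.

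The genuine gap is the one you yourself flag: the choice of pivot set. Your default choice of row-echelon pivots can fail. Take $\gam$ on vertices $\{1,2,3,4\}$ with edges $\{1,3\},\{2,3\},\{2,4\}$, and let $F=\mathrm{span}(v_1^*+v_4^*,\,v_2^*)$. Row-echelon with the given ordering gives $P=\{1,2\}$ and $\partial P=\{3,4\}$. A direct computation shows $C_F=\mathrm{span}(v_1^*)$, so $F+C_F=\mathrm{span}(v_1^*,v_2^*,v_4^*)$ contains $v_4^*\in V_{\partial P}$; your key claim fails, and indeed $h_{F'}=1>1/2=h_F$ for this $F'$. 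The admissible tuple $P=\{2,4\}$ (for which $|\partial P|=1$) does work, and this is precisely the one the paper's criterion selects: it minimizes the number $|E_0|$ of pivots that are isolated in the induced subgraph $\gam_P$. The paper then handles the case $|E_0|>0$ by an iterative modification of the admissible tuple (Lemmas~\ref{lem:iso-pair}--\ref{lem:k+1-still}), swapping out isolated pivots for adjacent non-pivot vertices when certain cup products are nonzero.

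So your ``combinatorial propagation step'' cannot be completed as stated: the delicate subcase you isolate is not merely delicate but genuinely obstructed for a generic pivot choice. What is missing is a selection principle for $P$, and the one that works is the paper's minimization of $|E_0|$ together with the subsequent basis modifications. Once that choice is made, your intersection claim becomes true and your argument goes through; without it, the claim is false.
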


Observe that in order to establish Lemma~\ref{lem:compare},
if we write $C'$  for the complement of $F'$ with respect to $q$, it suffices to show that
\[\dim C-\dim(C\cap F)\leq \dim C'-\dim(C'\cap F').\]

Proving Lemma~\ref{lem:compare} is also  rather complicated, so we will gather some preliminary results and  terminology first.
We will call a $j$--tuple $\{v^*_{i_1},\ldots,v^*_{i_j}\}$ \emph{admissible} if for each index $i_k$, there is a vector $w_{i_k}$ contained in the
linear span of \[\{v_1^*,\ldots,v_n^*\}\setminus \{v^*_{i_1},\ldots,v^*_{i_j}\}\] so that the vectors  of the form $f_{i_k}=v^*_{i_k}+w_{i_k}$ form a
basis for $F$. Such  bases for $F$ will be called \emph{admissible bases}.
Note that if $\{v^*_{i_1},\ldots,v^*_{i_j}\}$ is admissible  then the vectors $w_{i_k}$ are uniquely determined for $1\leq k\leq j$.
It is straightforward to determine  whether a tuple is admissible: indeed, express an arbitrary basis for $F$ in terms of the basis
 $\{v_1^*,\ldots,v_n^*\}$, the latter of which we view as the columns of a matrix. A tuple is admissible if and only if the corresponding $j\times j$ minor
 is invertible.

 Let \[E^*=\{v_1^*,\ldots,v_j^*\}\subset \{v_1^*,\ldots,v_n^*\}\] be admissible, and let
 $E=\{v_1,\ldots,v_j\}$ be the corresponding set of vertices. We write $\gam_E$ for the subgraph
 of $\gam$ spanned by $E$, and $E_0$ for the set of isolated vertices in $E$.
 For a given  subspace $F$, there are many possible admissible tuples $E^*$ we might consider.
 Among those, we will always focus our attention on those for which $|E_0|$ is
 minimized. Such a choice of $E^*$ may of course not be unique.

 Returning to an admissible basis for $F$, after re-indexing the  vertices of $\gam$ if necessary, we
 will fix a basis for $V$ now of the form
 \[\{f_1,\ldots,f_j,v_{j+1}^*,\ldots,v_n^*\},\] where $f_i=v_i^*+w_i$ as before. Such a basis for $V$ will be called \emph{standard relative
 to $F$}, and $E^*$ will be the  corresponding  admissible tuple.

 We  will fix the  following notation in the sequel.
 Suppose $F\subset V$ has dimension $j$. If $\{f_1,\ldots,f_j,v_{j+1}^*,\ldots,v_n^*\}$ is a standard basis of
 $V$ relative to $F$, write $F'$ for the  span of $\{v_1^*,\ldots,v_j^*\}$, write $C'$ for its
 orthogonal complement with respect to $q$, and let $Y$ denote the span of $\{v_{j+1}^*,\ldots,v_n^*\}$.

 We will in fact prove the following lemma, which implies Lemma~\ref{lem:compare}.

 \begin{lem}\label{lem:compare-ref}
If $F\subset V$ has dimension $j$ then there exists  a standard basis \[\{f_1,\ldots,f_j,v_{j+1}^*,\ldots,v_n^*\}\]
of $V$ relative to $F$ such that
if $x\in C$ and $F\cap C=0$ then $x\in C'\cap Y$, and if  $F\cap C\neq 0$ then \[x\in (C\cap  F)+(C'\cap Y).\]
 \end{lem}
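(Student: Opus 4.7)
The plan is to select an admissible tuple $E^*$ minimizing $|E_0|$ and to construct the decomposition $x = f + y$ with $f \in C \cap F$ and $y \in C' \cap Y$ explicitly. After re-indexing so that $E = \{v_1, \ldots, v_j\}$, the standard basis is $\{f_1, \ldots, f_j, v_{j+1}^*, \ldots, v_n^*\}$ with $f_k = v_k^* + w_k$ and $w_k = \sum_{s > j} \alpha_{ks} v_s^*$. The first key step is a \emph{structural lemma} extracted from minimality: for every $v_m \in E_0$ and every $s > j$ with $\alpha_{ms} \neq 0$, the vertex $v_s$ has no neighbor in $E$ other than possibly $v_m$ itself. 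I would prove this via a swap argument --- the tuple $E'^* = (E^* \setminus \{v_m^*\}) \cup \{v_s^*\}$ is admissible (since $\alpha_{ms} \neq 0$), and if $v_s$ had a neighbor in $E \setminus \{v_m\}$ a direct count shows $|E'_0| < |E_0|$, contradicting the choice of $E^*$.

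Next, given $x = \sum_i a_i v_i^* \in C$, I would expand each condition $q(f_\ell, x) = 0$ using Proposition~\ref{p:coho} and match coefficients on the edge basis $\{e_{st}^*\}$ of $H^2$. This yields: (i) $a_i = 0$ for every $v_i \in E \setminus E_0$ (from the edges lying inside $\gam_E$); (ii) $a_i = 0$ for every $v_i \in \partial E$ that is adjacent to $E \setminus E_0$ or to more than one vertex of $E_0$; (iii) for $v_i \in \partial E$ whose unique $E$-neighbor is $v_{m_0} \in E_0$, the identity $a_i = \alpha_{m_0 i} a_{m_0}$; and (iv) for each edge $\{v_s, v_t\}$ with $s, t > j$ and each $\ell \leq j$, the relation $\alpha_{\ell s} a_t = \alpha_{\ell t} a_s$.

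Setting $f := \sum_{v_m \in E_0} a_m f_m \in F$ and $y := x - f$, a case analysis that combines the structural lemma with (i)--(iii) shows that the coefficient of $y$ on $v_i^*$ vanishes whenever $v_i \in E \cup \partial E$; equivalently $y \in C' \cap Y$. To verify that $f \in C$, I would use the identity $q(f_\ell, f) = -q(f_\ell, y)$ (a consequence of $x \in C$) together with the already-established $y \in C'$ to reduce the task to showing $q(w_\ell, y) = 0$. Expanding on the edge basis, each coefficient is of the form $\alpha_{\ell s} b_t - \alpha_{\ell t} b_s$, where $b_i$ denotes the coefficient of $y$ on $v_i^*$, and vanishes by propagating the identity $a_t = \alpha_{\ell t} a_\ell$ out from $A_\ell := \{v_s \in \partial E : v_s \sim v_\ell\}$ into the vertices of $\mathrm{Vert}(\gam) \setminus (E \cup \partial E)$ adjacent to $A_\ell$, using (iv). In the degenerate case $F \cap C = 0$, the same structural computation forces $E_0 = \emptyset$, so $f = 0$ and $x = y \in C' \cap Y$ directly.

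The principal obstacle will be the final edge-by-edge cancellation check for $q(w_\ell, y)$, which requires careful bookkeeping of which $\alpha$-coefficients can be nonzero (governed by the structural lemma) and of the forced relations among the coordinates of $x$, and hence of $y$. The interplay between (iv), the formula expressing $y$'s coefficients through the $\alpha$'s and the $a_m$'s, and the support restrictions on the $w_m$'s is what ultimately enforces the desired vanishing.
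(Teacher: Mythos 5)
Your opening moves mirror the paper: fix an admissible tuple $E^*$ minimizing $|E_0|$, and extract constraints on the coefficients of $x\in C$ from $q(f_\ell,x)=0$. Your structural swap lemma (about the supports of $w_m$ for $v_m\in E_0$) is correct and is essentially the same observation that drives the paper's Lemma~\ref{lem:iso-pair}. The divergence is that you stop at an arbitrary $|E_0|$--minimizing tuple and propose the explicit decomposition $f=\sum_{v_m\in E_0}a_m f_m$, $y=x-f$, whereas the paper, after fixing such a tuple, still iteratively \emph{replaces the admissible tuple} (preserving $|E_0|$) via the construction of $B^{k+1},B^{k+2},\ldots$ in Lemmas~\ref{lem:iso-pair}--\ref{lem:k+1-still}. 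That iteration is not optional: your decomposition can fail, and the failure occurs precisely at the step you flagged as the ``principal obstacle,'' the vanishing of $q(w_\ell,y)$. Your structural lemma constrains $w_m$ only for \emph{isolated} $v_m\in E_0$; it says nothing about $w_\ell$ for $v_\ell\in E\setminus E_0$, and that is exactly where the obstruction lives.

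Here is a concrete counterexample. Let $\gam$ be the path $1-2-3-4-5-6-7$ and take
\[
F=\mathrm{span}\{\,1^*+6^*,\ 2^*,\ 4^*+7^*\,\}\subset V=H^1(A(\gam),L).
\]
The admissible triples are $\{1,2,4\},\{1,2,7\},\{2,4,6\},\{2,6,7\}$, with $|E_0|$ equal to $1,1,3,1$, so the minimum is $1$, attained in particular by $E=\{1,2,4\}$. For that choice the standard basis is $f_1=1^*+6^*$, $f_2=2^*$, $f_3=4^*+7^*$, so $w_1=6^*$ and $w_3=7^*$. Expanding $q(f_\ell,x)=0$ on the edge basis forces $a_1=a_2=a_3=a_5=a_6=a_7=0$, hence $C=\mathrm{span}\{4^*\}$ and $C\cap F=0$. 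Take $x=4^*$, so $a_4=1$. Your decomposition gives $f=a_4f_3=4^*+7^*$ and $y=-7^*$. One checks $y\in C'\cap Y$, but
\[
q(w_1,y)=q(6^*,-7^*)=-e_{67}^*\neq 0,
\]
so $q(f_1,f)=-q(f_1,y)\neq 0$ and $f\notin C$; in particular $f\notin C\cap F$. Worse, since $C\cap F=0$ the lemma's conclusion for this standard basis would require $x\in C'\cap Y$, but $4^*\in F'$ so $x\notin Y$: the conclusion of Lemma~\ref{lem:compare-ref} is simply false for $E=\{1,2,4\}$. This also refutes your last assertion, that $F\cap C=0$ forces $E_0=\emptyset$; here $F\cap C=0$ yet $E_0=\{4\}$. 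The paper's refinement step --- item~(2) in the construction of $B^{k+1}$, applied with $m=1$ and $s=7$ --- swaps the tuple to $E=\{1,2,7\}$ (keeping $|E_0|=1$), for which all $\lambda_\ell$ with $\ell\le j$ vanish and $x=4^*\in C'\cap Y$, so the lemma holds. You would need to incorporate a modification step of this kind; minimizing $|E_0|$ alone does not single out a standard basis that works.
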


 Lemma~\ref{lem:compare-ref} implies Lemma~\ref{lem:compare}, since then \[\dim C\leq \dim C'-\dim (C'\cap F')+\dim (C\cap F).\] We
 first establish it in the simpler cases where $\dim F=1$ and in  the case where there exists an admissible basis for $F$ with
 $E_0=\emptyset$.

\begin{proof}[Proof of Lemma~\ref{lem:compare-ref} when $\dim F=1$]
Clearly we may assume that $\dim V\geq 2$.
 Suppose $F$ is the span of $a\in V$. Observe that $F\subset C$.  We  write $\{f_1,v_2^*,\ldots,v_n^*\}$ for a standard basis for $V$
 relative to $F$. We have that $a$ is a nonzero multiple of $f_1$, and  $F'$ is the span of $v_1^*$. If $x\in C$ then we may write
 \[x=\lambda_1 f_1+\sum_{i=2}^n\lambda_i v_i^*.\] We write $w=x-\lambda_1 f_1$ and we assume $\lambda_m\neq 0$ for some $m\geq 2$.
 Note that  $q(f_1,x)=q(f_1,w)$. If $q(v_1^*,v_m^*)\neq 0$ then $q(f_1,x)$ has $\lambda_1\lambda_m$ as the  coefficient appearing
 before the vector dual to the edge $\{v_1,v_m\}$. So, if $x\in C$ then $q(v_1^*,v_m^*)=0$, whence it follows that $v_m^*\in C'$. Since $m$
 was chosen arbitrarily subject to the condition $\lambda_m\neq 0$, we have that $w\in C'\cap Y$, where $Y$ is the span of
 $\{v_2^*,\ldots,v_n^*\}$. This establishes the lemma in this case.
 \end{proof}

 \begin{proof}[Proof of Lemma~\ref{lem:compare-ref} when $E_0=\emptyset$]
 Let $\{f_1,\ldots,f_j,v_{j+1}^*,\ldots,v_n^*\}$ be a standard basis relative to $F$, where the admissible tuple $E^*$ satisfies
 $E_0=\emptyset$. Each component of $\gam_E$ consists of at least two vertices. We write $\{F',C', Y\}$ as before.
 Let $x\in C$ be written as \[\sum_{i=1}^j \lambda_i f_i+\sum_{i=j+1}^n \lambda_i v_i^*.\]  Suppose  first that $\lambda_m\neq 0$
 for some  $m\leq  j$.  The vertex $v_m\in E$ is adjacent to a vertex $v_k\in E$, so that $q(\lambda_m f_m,f_k)\neq 0$, whence it follows
 that $q(x,f_k)\neq 0$, contradicting the fact that $x\in C$. We conclude that $\lambda_m=0$ for $m\leq j$, so that we may write
 \[x=\sum_{i=j+1}^n \lambda_i v_i^*.\] Mimicking the proof in the case $\dim F=1$, we have that $x\in C'\cap Y$, as desired.
 \end{proof}

 Now let us  consider a standard basis \[B=\{f_1,\ldots,f_k,f_{k+1},\ldots,f_j,v_{j+1}^*,\ldots,v_n^*\}\] relative to $F$, where the vertices in
 the admissible tuple $E$
  with indices $1\leq i\leq k$ are precisely  those which are not isolated in $\gam_E$. We remind the reader that we assume
  here and henceforth that $B$ is chosen
  in such a way that $|E_0|$ is minimized.

  By the proofs of the cases of
  Lemma~\ref{lem:compare-ref} given so far, we may assume that $k<j$.
  Let  $x\in C$ as before,
  and write \[x=\sum_{i=1}^j\lambda_i f_i+\sum_{i=j+1}^k\lambda_i v_i^*.\]
  As argued in the proof in the case  $E_0=\emptyset$, we have that $\lambda_m=0$ for
  $m\leq k$.

  \begin{lem}\label{lem:iso-pair}
  Let $B$ be as above. If $k+1<m\leq j$ then $q(f_{k+1},f_m)=0$.
 \end{lem}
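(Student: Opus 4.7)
I would argue by contradiction against the minimality of $|E_0|$: assuming $q(f_{k+1}, f_m) \neq 0$ for some $k+1 < m \leq j$, I will exhibit another admissible tuple $(E')^*$ for $F$ whose induced subgraph $\gam_{E'}$ has strictly fewer isolated vertices than $\gam_E$.

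Write $f_{k+1} = v_{k+1}^* + w_{k+1}$ and $f_m = v_m^* + w_m$ with $w_{k+1} = \sum_{\ell>j} \mu_\ell^{k+1} v_\ell^* \in Y$ and $w_m = \sum_{\ell>j} \mu_\ell^m v_\ell^* \in Y$. Since $v_{k+1}, v_m \in E_0$ are not adjacent in $\gam$, the term $q(v_{k+1}^*, v_m^*)$ vanishes, and $q(f_{k+1}, f_m)$ is supported only on edge-duals of the form $e^*_{\{v_{k+1}, v_\ell\}}$ or $e^*_{\{v_m, v_\ell\}}$ (with $\ell > j$), or $e^*_{\{v_\ell, v_{\ell'}\}}$ (with $\ell, \ell' > j$), with coefficients (up to sign) $\mu_\ell^m$, $\mu_\ell^{k+1}$, and $\mu_\ell^{k+1}\mu_{\ell'}^m - \mu_{\ell'}^{k+1}\mu_\ell^m$, respectively. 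The hypothesis $q(f_{k+1}, f_m) \neq 0$ therefore forces one of three cases, each suggesting a swap:
\begin{enumerate}
\item[(a)] $\mu_\ell^m \neq 0$ with $\{v_{k+1}, v_\ell\} \in \mathrm{Edge}(\gam)$: set $E' = (E \setminus \{v_m\}) \cup \{v_\ell\}$;
\item[(b)] $\mu_\ell^{k+1} \neq 0$ with $\{v_m, v_\ell\} \in \mathrm{Edge}(\gam)$: set $E' = (E \setminus \{v_{k+1}\}) \cup \{v_\ell\}$;
\item[(c)] $\mu_\ell^{k+1}\mu_{\ell'}^m - \mu_{\ell'}^{k+1}\mu_\ell^m \neq 0$ with $\{v_\ell, v_{\ell'}\} \in \mathrm{Edge}(\gam)$: set $E' = (E \setminus \{v_{k+1}, v_m\}) \cup \{v_\ell, v_{\ell'}\}$.
\end{enumerate}

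Each $(E')^*$ is admissible: in (a) and (b), $f_m$ (respectively $f_{k+1}$) is the unique basis vector with nonzero $v_m^*$- (respectively $v_{k+1}^*$-) coordinate, and nonvanishing of the indicated $\mu$ supplies the needed invertibility; in (c), any $z \in F \cap \mathrm{span}\{v_i^* : v_i \notin E'\}$ must have the form $\beta_{k+1} f_{k+1} + \beta_m f_m$ (since $z$ has zero coordinate on $v_i^*$ for $i \leq j$, $i \notin \{k+1, m\}$), and its vanishing $v_\ell^*$- and $v_{\ell'}^*$-coordinates yield a $2{\times}2$ homogeneous system whose determinant is exactly $\mu_\ell^{k+1}\mu_{\ell'}^m - \mu_{\ell'}^{k+1}\mu_\ell^m \neq 0$, forcing $z = 0$. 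Finally, the newly added vertices are non-isolated in $\gam_{E'}$ (adjacent to $v_{k+1}$, $v_m$, or to each other), whereas the deleted vertices lay in $E_0$ and so had no neighbors in $E$; thus no previously non-isolated vertex becomes isolated, and $|E'_0| \leq |E_0| - 2$, contradicting the minimality of $|E_0|$. The main obstacle is the admissibility verification in case (c), which is precisely where the $2{\times}2$ minor appearing in the cup product expansion must be matched with the minor governing admissibility of the swapped tuple; once that is seen, cases (a) and (b) are the same trick applied to a single swap, and the vertex count is routine.
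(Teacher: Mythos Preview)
Your proof is correct and follows essentially the same approach as the paper: expand $q(f_{k+1},f_m)$, observe that its nonvanishing forces one of three cases (a nonzero coefficient $\mu^m_\ell$ with $v_\ell$ adjacent to $v_{k+1}$, a nonzero $\mu^{k+1}_\ell$ with $v_\ell$ adjacent to $v_m$, or a nonvanishing $2{\times}2$ minor with $v_\ell,v_{\ell'}$ adjacent), and in each case swap one or two vertices of $E_0$ for vertices of index $>j$ to obtain a new admissible tuple with strictly fewer isolated vertices, contradicting minimality. Your admissibility check via the kernel characterization $F\cap\mathrm{span}\{v_i^*:v_i\notin E'\}=0$ is equivalent to the paper's column-swap/determinant argument, and your isolated-vertex count (yielding $|E'_0|\leq |E_0|-2$) is slightly sharper than needed but entirely correct.
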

 \begin{proof}
 Write \[f_{k+1}=v_{k+1}^*+\sum_{s=j+1}^n\mu^{k+1}_s v_s^*,\quad f_{m}=v_{m}^*+\sum_{t=j+1}^n\mu^{m}_t v_t^*.\]
  By assumption, we have that $q(v_{k+1}^*,v_m^*)=0$, since the corresponding vertices are isolated.
  If $q(f_{k+1},f_m)\neq 0$ then  one of the three following  cases must  occur.

  \begin{enumerate}
   \item\label{n1}
  The coefficient $\mu_s^{k+1}$ is nonzero for a suitable $s>j$ with $q(v_s^*,v_m^*)\neq 0$.
  \item\label{n2}
  The  coefficient $\mu_t^m$ is  nonzero for a suitable $t>j$ with  $q(v_{k+1}^*,v_t^*)\neq 0$.
  \item\label{n3}
  We have $\mu_s^{k+1}\mu_t^m\neq \mu_t^{k+1}\mu_s^m$ for suitable indices $s,t>j$ with $s\neq  t$ and $q(v_s^*,v_t^*)\neq 0$.
  \end{enumerate}

In the first of these possibilities,
we write \[E'=(E\setminus \{v_{k+1}\})\cup \{v_s\}.\] We claim that $(E')^*$  remains admissible. This is straightforward
  to check.  Indeed, we record an $n\times n$ matrix $M$ whose columns are  labelled by $\{v_1^*,\ldots,v_n^*\}$,  whose rows are labelled
  by $\{f_1,\ldots,f_j,v_j^*,\ldots,v_n^*\}$, and whose entries are the $v_{\ell}^*$ coefficient $m_{i,\ell}$ of  the $i^{th}$ row
  basis element. We  have that the $j\times j$ block in the upper left hand corner is the identity matrix. Exchanging $v_{k+1}$ for $v_s$
  corresponds to switching the $(k+1)^{st}$  and $s^{th}$ columns of $M$. The $(k+1)^{st}$ row of the $s^{th}$ column reads
  $\mu_s^{k+1}\neq 0$. Thus after exchanging these  two columns, the upper left hand $j\times j$ block  remains invertible. Moreover,
  $q(v_s^*,v_m^*)\neq 0$, whence $v_s$ and  $v_m$ are no longer isolated vertices. It follows that $|E'_0|<|E_0|$, which contradicts
  the  minimality of $|E_0|$. Thus, the first item is ruled out. We may rule out the second of these items analogously.

  To rule out the third item, we let $E''=E\setminus \{v_{k+1},v_m\}\cup \{v_s,v_t\}$. It suffices to show that $(E'')^*$ is admissible,
  since $v_s$ and $v_t$ are adjacent in $\gam$ under the assumptions of the third item. We switch the columns with labels $k+1$ and $s$,
  and with labels $m$ and $t$. Since $\mu_s^{k+1}\mu_t^m\neq \mu_t^{k+1}\mu_s^m$, the determinant
  of the upper left hand $j\times j$ block remains nonzero. This establishes the lemma.
 \end{proof}

 In order to complete the proof  of Lemma~\ref{lem:compare-ref}, we will need to describe a process of modifying a given standard basis
 $B$ to obtain one with more advantageous features. Specifically, we will transform $B$ into a standard basis $B^{k+1}$ such that if
 $x\in C$ is expressed with respect to $B^{k+1}$, then the
  first $k+1$ coefficients  of $x$  must vanish.
  To this end, suppose  $f_r\notin C$ for $r>k$. Without loss of generality, $r=k+1$.

  By Lemma~\ref{lem:iso-pair}, we see that there is an index $m<k+1$ such that $q(f_m,f_{k+1})\neq 0$.
  Since $v_{k+1}$ is isolated, we have $q(v_{k+1}^*,v_m^*)=0$. Again we write
   \[f_{k+1}=v_{k+1}^*+\sum_{s=j+1}^n\mu^{k+1}_s v_s^*,\quad f_{m}=v_{m}^*+\sum_{t=j+1}^n\mu^{m}_t v_t^*.\]
   Observe that at least one of
   items~\ref{n1},~\ref{n2}, or~\ref{n3} in the proof of Lemma~\ref{lem:iso-pair} above must occur for this pairing to be nonzero.
   We now  proceed to modify $B$ to obtain a new standard
   basis $B^{k+1}$ as follows,  according to the reason for which $q(f_m,f_{k+1})\neq 0$. Namely:

   \begin{enumerate}
   \item
   If $\mu_t^m\neq 0$ for some index $t$ with $q(v_{k+1}^*,v_t^*)\neq 0$, then we set $B^{k+1}=B$.
   \item
   If the previous item does not hold but if there exists an index $s$ with $\mu_s^{k+1}\neq 0$ and $q(v_m^*,v_s^*)=0$ then
   we substitute
   $v_s^*$ for $v_{k+1}^*$ to obtain an admissible tuple as in Lemma~\ref{lem:iso-pair}. We then
   set $B^{k+1}$ to be the standard basis associated to the corresponding admissible tuple.
   \item
   If both of the previous items do not hold then
    at least one of the products $\mu_s^{k+1}\mu_t^m$ and $\mu_t^{k+1}\mu_s^m$
   is nonzero for suitable choices of indices $s$ and $t$ with $q(v_s^*,v_t^*)\neq 0$.
   We substitute $v_s^*$ for $v_{k+1}^*$.
   As before, the resulting tuple is admissible. We then write  $B^{k+1}$
   for the corresponding standard basis.
   \end{enumerate}

As before, these exchanges do not change the size of $|E_0|$.
   We now write  \[B^{k+1}=\{f_1^{k+1},\ldots,f_j^{k+1},e_{j+1},\ldots,e_n\},\] where indices have been renumbered after any
   substitutions. Note the following.

  \begin{obs}
  For $r\leq j$ and $r\neq k+1$, we have  that $f_r^{k+1}$ differs from $f_r$ by a (possibly zero) multiple of $f_{k+1}$,
   and $f_{k+1}^{k+1}=f_{k+1}$.
   \end{obs}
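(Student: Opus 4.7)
My plan is to verify the observation by direct inspection of the change-of-basis matrix, using the $n\times n$ coefficient matrix $M$ introduced in the proof of Lemma~\ref{lem:iso-pair}. In case 1 there is nothing to check: the procedure sets $B^{k+1}=B$, so $f_r^{k+1}=f_r$ for every $r\leq j$ and both assertions are immediate. The work lies in cases 2 and 3, where the admissible tuple is modified by exchanging $v_{k+1}^*$ for $v_s^*$ while every other admissible vertex is left fixed.

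The key computation is as follows. In the standard basis $B$ the matrix $M$ has the block form
\[M=\begin{pmatrix} I_j & W \\ 0 & I_{n-j}\end{pmatrix},\qquad W=(\mu_\ell^r)_{r\leq j,\ \ell>j}.\]
Swapping columns $k+1$ and $s$ produces a matrix $\widetilde M$ whose upper-left $j\times j$ block agrees with $I_j$ outside the $(k+1)$st column, and in the $(k+1)$st column has the entries $\mu_s^r$ for $r\neq k+1$ together with $\mu_s^{k+1}\neq 0$ on the diagonal. Expanding along row $k+1$ shows that this block has determinant $\mu_s^{k+1}$, which both reconfirms admissibility (as in Lemma~\ref{lem:iso-pair}) and dictates the row reduction needed to recover a standard basis: first scale row $k+1$ by $1/\mu_s^{k+1}$, then for each $r\leq j$ with $r\neq k+1$ subtract $\mu_s^r$ times the new row $k+1$ from row $r$. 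Read back as vectors of $F$, these operations give
\[f_{k+1}^{k+1}=\frac{1}{\mu_s^{k+1}}\,f_{k+1},\qquad f_r^{k+1}=f_r-\frac{\mu_s^r}{\mu_s^{k+1}}\,f_{k+1}\ \text{ for }r\leq j,\ r\neq k+1,\]
which is exactly the content of the observation: each $f_r^{k+1}-f_r$ is a scalar multiple of $f_{k+1}$, and $f_{k+1}^{k+1}$ is itself a nonzero scalar multiple of $f_{k+1}$.

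No part of this argument looks like a serious obstacle; it is essentially a bookkeeping check about elementary column and row operations on $M$. The only subtlety worth flagging is a scalar normalization: under the strict convention from the definition of admissibility (leading coefficient equal to $1$), the stated equality $f_{k+1}^{k+1}=f_{k+1}$ is literally correct only when $\mu_s^{k+1}=1$, and in general must be read up to the nonzero scalar $1/\mu_s^{k+1}$. This rescaling of a single basis vector of $F$ is harmless for any subsequent step, as it does not alter spans, $q$-orthogonal complements, or intersections with $F$, which are the only features of $B^{k+1}$ that will be used downstream.
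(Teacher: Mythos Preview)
Your argument is correct. The paper states this as an observation without supplying a proof, so there is nothing to compare against; your explicit row-reduction computation on the coefficient matrix $M$ is exactly the right way to verify it. The formulas
\[f_r^{k+1}=f_r-\frac{\mu_s^r}{\mu_s^{k+1}}\,f_{k+1}\quad (r\neq k+1),\qquad f_{k+1}^{k+1}=\frac{1}{\mu_s^{k+1}}\,f_{k+1}\]
follow as you say, since eliminating the $v_s^*$-coefficient from each $f_r$ uses only $f_{k+1}$ (it being the unique original basis vector of $F$ with a nonzero $v_s^*$-coefficient, namely $\mu_s^{k+1}$).

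Your caveat about scaling is also on target: under the paper's normalization (admissible basis vectors have leading coefficient~$1$), the equality $f_{k+1}^{k+1}=f_{k+1}$ is literally correct only when $\mu_s^{k+1}=1$; in general $f_{k+1}^{k+1}$ is a nonzero scalar multiple of $f_{k+1}$. As you note, this is harmless downstream: in Lemma~\ref{lem:k+1} the computation $q(f_m^{k+1},f_{k+1}^{k+1})=q(f_m,f_{k+1})$ acquires a nonzero scalar factor but remains nonzero, and Lemma~\ref{lem:k+1-still} is likewise unaffected.
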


   If $x\in C$, we write it with respect to this new basis, so that
   \[x=\sum_{i=1}^j\lambda_i^{k+1} f_i^{k+1}+\sum_{i=j+1}^n\lambda_i^{k+1} v_i^*.\] The previous considerations show that
   $\lambda_i^{k+1}=0$
   for $i\leq k$.

   \begin{lem}\label{lem:k+1}
   The following hold.
   \begin{enumerate}
   \item
   If $x\in C$ is as above, then $\lambda^{k+1}_{k+1}=0$.
   \item
   For $k+1\leq r,s\leq j$, we have $q(f_r^{k+1},f_s^{k+1})=0$.
   \end{enumerate}
   \end{lem}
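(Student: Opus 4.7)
The plan is to prove part (2) first, since it is structural and does not rely on part (1), and then to leverage the structural information together with an explicit edge-dual computation in $H^2$ to prove part (1).

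For part (2), I first note that $B^{k+1}$ is itself a standard basis relative to some admissible tuple $E'$ whose non-isolated vertices in $\gam_{E'}$ are precisely those indexed by $1, \ldots, k$. In case (1) we have $E' = E$, and this is immediate. In cases (2) and (3), the substitution replaces the isolated vertex $v_{k+1}$ by a new vertex $v_s$; the inequality $|E'_0| \geq |E_0|$, forced by the minimality assumption on $|E_0|$, combined with the fact that removing an isolated vertex from $E$ does not affect adjacencies among the remaining vertices, forces $v_s$ to itself be isolated in $\gam_{E'}$. With this structure established, Lemma~\ref{lem:iso-pair} applies to $B^{k+1}$ and yields $q(f_{k+1}^{k+1}, f_m^{k+1}) = 0$ for every $k+1 < m \leq j$. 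Running the analogous argument with $k+1$ replaced by any other isolated index shows $q(f_r^{k+1}, f_s^{k+1}) = 0$ for all pairs $k+1 \leq r \neq s \leq j$; the $r = s$ case follows from $v_i^* \smile v_i^* = 0$ (Proposition~\ref{p:coho}), which in turn forces $f_i^{k+1} \smile f_i^{k+1} = 0$ by bilinear expansion.

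For part (1), the plan is to pair $x$ with the distinguished basis vector $f_m^{k+1}$ (where $m \leq k$ is the index for which $q(f_m, f_{k+1}) \neq 0$ originally) and extract $\lambda_{k+1}^{k+1}$ from the equation $q(f_m^{k+1}, x) = 0$ by reading off the coefficient of a suitably chosen edge dual in $H^2$. In case (1), where $B^{k+1} = B$, the correct choice is the dual $e^*$ of the edge $\{v_{k+1}, v_t\}$ guaranteed by the case hypothesis (where $\mu_t^m \neq 0$ and $q(v_{k+1}^*, v_t^*) \neq 0$). A direct bilinear expansion shows that the coefficient of $e^*$ in $q(f_m, f_{k+1})$ equals $\pm \mu_t^m \neq 0$, while it vanishes in $q(f_m, f_r)$ for each $r > k+1$ and in $q(f_m, v_i^*)$ for each $i > j$: the point is that the indices $k+1$ and $t$ cannot simultaneously appear as leading indices in the relevant pairings, since $m \notin \{k+1, t\}$, each $r$ satisfies $r \neq k+1$ and $r \leq j < t$, and every index $u$ of a vector in $\{v_{j+1}^*, \ldots, v_n^*\}$ satisfies $u > j \geq k+1$. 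Equating the $e^*$ coefficient in $q(f_m^{k+1}, x) = 0$ yields $\lambda_{k+1}^{k+1} \mu_t^m = 0$, hence $\lambda_{k+1}^{k+1} = 0$. Cases (2) and (3) follow the same template, with the detecting edge taken to be $\{v_s, v_t\}$ (case 3) or the appropriate edge involving $v_s$ (case 2), depending on which monomial in the bilinear expansion of $q(f_m, f_{k+1})$ is responsible for the nonvanishing after substitution.

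The principal obstacle is the bookkeeping in cases (2) and (3): the change of admissible tuple modifies the $w$-components of all basis vectors of $F$ via an implicit row-reduction, and one must verify that the chosen edge dual still pairs nontrivially with $f_{k+1}^{k+1}$ while receiving no contribution from any other $f_r^{k+1}$ or from the complementary basis vectors. This reduces to checking that the substitution preserves the support structure of the relevant vertices, which is exactly what is guaranteed by the case analysis used to construct $B^{k+1}$.
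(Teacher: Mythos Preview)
Your argument is correct, and for part~(1) it follows the same core idea as the paper: pair $x$ with $f_m^{k+1}$ and read off the coefficient of a specific edge dual that can only arise from the $f_{k+1}^{k+1}$--summand of $x$.

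The main difference lies in how the two proofs organize the computation. The paper records an \emph{Observation} immediately preceding the lemma: for $r\neq k+1$ one has $f_r^{k+1}=f_r+\alpha_r f_{k+1}$ for some scalar $\alpha_r$, and $f_{k+1}^{k+1}$ is (a scalar multiple of) $f_{k+1}$. This single fact does most of the work. For part~(2) the paper simply expands $q(f_r^{k+1},f_s^{k+1})$ bilinearly and invokes Lemma~\ref{lem:iso-pair} for the \emph{original} basis $B$; there is no need to show that $B^{k+1}$ again has minimal $|E_0|$ or to re-run the iso-pair argument. For part~(1), the Observation gives $q(f_m^{k+1},f_{k+1}^{k+1})=q(f_m,f_{k+1})\neq 0$, and one notes that after re-indexing only $f_{k+1}^{k+1}$ among all vectors of $B^{k+1}$ is supported on the new $v_{k+1}^*$; this handles all three modification cases uniformly, avoiding your separate case analysis. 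Your route through re-established minimality for part~(2) is valid and conceptually pleasant (it explains \emph{why} the substitution cannot decrease $|E_0|$), but the paper's Observation is the more economical device. Two small remarks: your diagonal case $r=s$ in part~(2) really needs antisymmetry of $\smile$, not merely $v_i^*\smile v_i^*=0$; and your claim that part~(1) ``leverages'' part~(2) is unnecessary---neither proof of part~(1) actually uses part~(2).
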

   \begin{proof}
   Suppose now that $\lambda_{k+1}^{k+1}\neq 0$, and consider the index $m$ as before which was chosen so that $q(f_m,f_{k+1})\neq 0$.
   Then for a suitable constant $\alpha$, we have \[q(f_m^{k+1},f_{k+1}^{k+1})=q(f_m+\alpha f_{k+1},f_{k+1})=q(f_m,f_{k+1})\neq 0.\]
   Moreover, $q(f_m^{k+1},f_{k+1}^{k+1})$ is supported  on the dual vector to the edge $\{v_m,v_{k+1}\}$ or
   $\{v_t,v_{k+1}\}$ (which was the edge $\{v_m,v_s\}$ or the edge $\{v_t,v_s\}$
   before the vertices  were re-indexed in the definition of $B^{k+1}$). No other summand making up the vector $x$
   (i.e.~ $\lambda_i f_i^{k+1}$ for $i\geq k+2$ or $\lambda_i^{k+1} v_i^*$ for $i\geq j+1$) is supported
   on $v_{k+1}^*$.
   It follows
   that  if $\lambda_{k+1}^{k+1}\neq  0$ then $q(x,f_m^{k+1})\neq 0$,
   which is a contradiction. We may therefore conclude that $\lambda^{k+1}_{k+1}=0$.

   For the second claim of the lemma, note that for \[k+1\leq r,s\leq j,\] we have $q(f_r,f_s)=0$ by Lemma~\ref{lem:iso-pair},
   which implies that  $q(f_r^{k+1},f_s^{k+1})=0$ as well since
   both of these vectors  differ from $f_r$ and  $f_s$ respectively by a  multiple  of $f_{k+1}$.
   \end{proof}

   Now suppose that $f_i^{k+1}\notin C$ for some $k+2\leq i\leq j$, and  without loss of generality we may  assume that $i=k+2$.
   Repeating the procedure for the
   construction of $B^{k+1}$, we may add multiples of $f_{k+2}^{k+1}$ to the basis vectors which  are distinct  from $f_{k+2}^{k+1}$ itself
   in order to obtain a new basis  \[B^{k+2}=\{f_1^{k+2},\ldots,f_j^{k+2}, v_{j+1}^*,\ldots,v_n^*\}.\] Since $q(f_{k+2}^{k+1},f_i^{k+2})=0$  for
   $i\geq k+1$, we must have that $q(f_r^{k+1},f_{k+2}^{k+1})\neq 0$ for some $r\leq k$. As before, if $x\in C$, we express $x$ in this
   basis with coefficients $\{\lambda_i^{k+2}\}_{1\leq i\leq n}$ and observe that the  coefficients satisfy $\lambda_i^{k+2}=0$ for $i\leq k$ and
   $\lambda^{k+2}_{k+2}=0$. It is conceivable that in the course
   of this modification we may find that $\lambda_{k+1}^{k+2}\neq 0$, a conclusion which we wish to rule out.

   \begin{lem}\label{lem:k+1-still}
   If $x\in C$ is expressed with respect to the basis  $B^{k+2}$, then we have $\lambda^{k+2}_{k+1}=0$.
   \end{lem}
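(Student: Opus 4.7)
The plan is to track how the coefficient of $x \in C$ at position $k+1$ transforms under the change of basis from $B^{k+1}$ to $B^{k+2}$, and then to invoke Lemma~\ref{lem:k+1}(1). The entire content of the lemma is essentially that this transformation leaves the $(k+1)$-st coordinate alone.

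First, I would describe the change of basis explicitly. By construction, $B^{k+2}$ is obtained from $B^{k+1}$ by applying the procedure used to construct $B^{k+1}$ from $B$, now centred on the vector $f_{k+2}^{k+1}$. Lemma~\ref{lem:k+1}(2) already gives $q(f_r^{k+1}, f_{k+2}^{k+1}) = 0$ for all $k+1 \leq r \leq j$ with $r \neq k+2$, so any index $m$ for which $q(f_m^{k+1}, f_{k+2}^{k+1}) \neq 0$ must satisfy $m \leq k$. After possibly substituting some $v_s^*$ with $s > j$ for $v_{k+2}^*$ in the admissible tuple and relabelling accordingly, the resulting basis takes the form $f_r^{k+2} = f_r^{k+1} + \alpha_r f_{k+2}^{k+1}$ for $r \neq k+2$ (with $\alpha_r \in L$), together with $f_{k+2}^{k+2} = f_{k+2}^{k+1}$ and the trailing vectors $v_{j+1}^*, \ldots, v_n^*$ unchanged.

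Next, inverting this relation gives $f_r^{k+1} = f_r^{k+2} - \alpha_r f_{k+2}^{k+2}$ for $r \neq k+2$. Substituting into the expansion $x = \sum_{r=1}^j \lambda_r^{k+1} f_r^{k+1} + \sum_{r=j+1}^n \lambda_r^{k+1} v_r^*$ and regrouping according to the basis $B^{k+2}$ yields $\lambda_r^{k+2} = \lambda_r^{k+1}$ for every $r \neq k+2$, while the position $k+2$ absorbs all the cross terms into the single coefficient $\lambda_{k+2}^{k+2} = \lambda_{k+2}^{k+1} - \sum_{r \neq k+2} \alpha_r \lambda_r^{k+1}$. In particular, $\lambda_{k+1}^{k+2} = \lambda_{k+1}^{k+1}$, and Lemma~\ref{lem:k+1}(1) then forces this to be zero, as claimed.

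The main obstacle in making the above rigorous is verifying that the change of basis truly has the clean additive form asserted above even in the presence of a substitution in the admissible tuple (cases (2) and (3) of the procedure). Here the key observation is that the substitution only swaps $v_{k+2}^*$ for some $v_s^*$ with $s > j$, and never touches $v_{k+1}^*$. Since $f_{k+1}^{k+1}$ has coefficient zero at $v_{k+2}^*$ by admissibility, the update required to produce $f_{k+1}^{k+2}$ consists precisely of adding a scalar multiple of the pivot $f_{k+2}^{k+1}$. Thus the clean additive formula is recovered after relabelling, and the identification $\lambda_{k+1}^{k+2} = \lambda_{k+1}^{k+1}$ is unaffected.
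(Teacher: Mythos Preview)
Your argument is correct and takes a genuinely different route from the paper's. The paper does \emph{not} track coefficients through the change of basis; instead it re-runs the pairing argument of Lemma~\ref{lem:k+1} inside the new basis $B^{k+2}$. Concretely, the paper picks an index $m\le k$ with $q(f_m^{k+1},f_{k+1}^{k+1})\neq 0$, writes $f_m^{k+2}=f_m^{k+1}+\alpha f_{k+2}^{k+1}$ and $f_{k+1}^{k+2}=f_{k+1}^{k+1}+\beta f_{k+2}^{k+1}$, and uses the orthogonality $q(f_{k+1}^{k+1},f_{k+2}^{k+1})=0=q(f_{k+2}^{k+1},f_{k+2}^{k+1})$ to compute
\[
q(f_m^{k+2},f_{k+1}^{k+2})=q(f_m^{k+1},f_{k+1}^{k+1})+\beta\, q(f_m^{k+1},f_{k+2}^{k+1}).
\]
Since the first summand is supported on a dual edge incident to $v_{k+1}$, so is the left-hand side, and then the support argument from Lemma~\ref{lem:k+1}(1) forces $\lambda_{k+1}^{k+2}=0$.

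Your approach is more elementary: it is pure linear algebra, observing that the transition $B^{k+1}\to B^{k+2}$ is a shear fixing every $F$-coordinate except the $(k{+}2)$-nd, hence $\lambda_{k+1}^{k+2}=\lambda_{k+1}^{k+1}$, which already vanishes. This is cleaner and avoids re-invoking the cup-product structure. The paper's approach has the mild advantage that it establishes directly the support property of $q(f_m^{k+2},f_{k+1}^{k+2})$, which is exactly what drives the subsequent inductive steps; your argument gives the vanishing of the coefficient but defers that support statement. One small point worth making explicit in your last paragraph: in the substitution case the displaced trailing vector $v_s^*$ (old labeling) must also be re-expressed in $B^{k+2}$, and one should note it lands in the span of $f_{k+2}^{k+2}$ and the new trailing vectors, hence contributes nothing at position $k{+}1$. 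With that one line added, your argument is complete.
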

   \begin{proof}
   We consider a vector $f_m^{k+1}$ which  satisfies $q(f_m^{k+1},f_{k+1}^{k+1})\neq  0$, and for suitable constants $\alpha$ and $\beta$,
   we obtain expressions \[f_m^{k+2}=f_m^{k+1}+\alpha f_{k+2}^{k+1},\qquad  f_{k+1}^{k+2}=f_{k+1}^{k+1}+\beta f_{k+2}^{k+1}.\]
   Computing, we have \[q(f_m^{k+2},f_{k+1}^{k+2})=q(f_m^{k+1},f_{k+1}^{k+1})+\beta q(f_m^{k+1},f_{k+2}^{k+1}),\] using the orthogonality
   of $f_{k+1}^{k+1}$ and $f_{k+2}^{k+1}$.

     It  follows that $q(f_m^{k+2},f_{k+1}^{k+2})$ is supported on the vector dual to the edge $\{v_{k+1},v_r\}$ for a suitable  $r$, as this was
   already true of $q(f_m^{k+1},f_{k+1}^{k+1})$.  Then, as we argued for $B^{k+1}$ in Lemma~\ref{lem:k+1},
   we have that $\lambda^{k+2}_{k+1}=0$ again.
   \end{proof}

We can now complete the argument.

 \begin{proof}[Proof of Lemma~\ref{lem:compare-ref}]
   We inductively construct a sequence of
   distinct bases  for  $V$  and  corresponding  admissible  tuples which we write as
   \[\{B^{k+2},B^{k+3},\ldots\},\qquad \{(E^{k+2})^*,(E^{k+3})^*,\ldots\},\]
   which have the property that if $x\in C$ is written with  respect to the  basis $B^{k+s}$ then
   the coefficients $\lambda^{k+s}_{\ell}$ of $f_{\ell}^{k+s}$  are trivial
   for $\ell\leq k+s$. We are  able  to construct $B^{k+s+1}$ from  $B^{k+s}$ precisely
   when there is an index $k+s\leq i\leq j$  such that $f_i^{k+s}\notin C$. Since  $F$ is  finite dimensional, the sequence  will  terminate
   after  finitely many  terms. This will happen either for $k+s=j$ or for some  $s<j-k$.

   In the first case, we see  that $C\cap F=0$. In the second case, the basis vectors $\{f_{k+s+1}^{k+s},\ldots,f_j^{k+s}\}$ are orthogonal
   to $F$. To complete the proof  of the lemma, we set $f_i=f_i^{k+s}$ for $1\leq i\leq j$, and $F'$ is the span of the associated admissible
   tuple $(E^{k+s})^*$. As in the statement of the lemma, we write $Y$ for the  span of $\{v_{j+1}^*,\ldots,v_n^*\}$.
    If $x\in C$ then \[x=\sum_{i=k+s+1}^j \lambda_i^{k+s} f_i+y\] for a suitable vector $y\in Y$. Note  that  by  assumption, we have
    $x-y\in C$, which implies that  $y\in C$. This shows  that  $y\in  C'\cap Y$, since $q(y,f_i)=0$ for all $i\leq j$ and hence
    $q(y,v_i^*)=0$ for $i\leq j$. It follows that if $C\cap F=0$ then $x=y\in C'\cap Y$, and otherwise that
    $x\in (C\cap F)+(C'\cap Y)$, which completes  the  proof.
    \end{proof}

\subsection{Proof of the main results}
Theorem~\ref{cor:raag} and Theorem~\ref{thm:main} now follow almost immediately. The size of the set of vertices of $\gam_i$ tending
to  infinity  is equivalent to the dimension of  $V_i=H^1(A(\gam))$ tending to infinity, over any field.
Bounded $q_i$--valence of $V_i$,
bounded valence of $\gam_i$, and bounded  centralizer rank in  $A(\gam)$ are all equivalent by Corollary~\ref{cor:valence} and
Lemma~\ref{lem:valence}. Finally, Theorem~\ref{lem:cheeger} implies  that the  Cheeger constant of $\gam_i$ is equal to the
Cheeger constant of the triple $(H^1(A(\gam)),H^2(A(\gam)),q)$, over any field. This establishes the main results.

\subsection{Generalizations to higher dimension}
By considering cohomology of right-angled Artin groups beyond dimension two, one can use vector space
expanders to generalize graph expanders to higher dimensions. Unfortunately,
this does not seem to give much new information, as might be expected;
indeed, the cohomology of  a  right-angled Artin group is completely determined by its behavior in dimension one and the cup product pairing
therein. This can easily be seen through a suitable generalization of Proposition~\ref{p:coho} to higher dimensional
cohomology: the cohomology  of the right-angled Artin group  $A(\gam)$
 in each dimension is determined  by the corresponding number of cells in the flag  complex of $\gam$
 (with a dimension shift), and the cup product pairing is determined by the face
 relation. The  flag complex, moreover, is completely determined by its $1$--skeleton. In particular, there does not
seem to be a  meaningful connection to more fruitful  notions  of higher dimensional expanders (cf.~\cite{LubICM}, for instance).

\section{A vector space expander family that does not arise from a graph expander family}\label{sec:ex}

In this section, we give  a method  for producing families of vector space expanders that do not arise from the cohomology rings
of right-angled Artin groups of graph expanders.

Let $\{\gam_i\}_{i\in\N}$ be a family of finite connected simplicial graphs which form a graph expander and let
$L$ be an arbitrary field. We will write
\[V_i=H^1(A(\gam_i),L),\quad W_i=H^2(A(\gam_i),L),\quad q_i=\smile,\]  where $\smile$ denotes the cup product in the cohomology ring
of the corresponding group. For each  $i$, we choose an arbitrary vertex $v^i$ of $\gam_i$.
We set  $V_i'=V_i$, and we let $W_i =W\oplus L$, where the summand $L$ is generated by a  vector
$z_i^*$. We set $q_i'=q_i\oplus q_{0,i}$, where $q_{0,i}((v^i)^*,(v^i)^*)=z_i^*$, and where $q_{0,i}$ vanishes
on inputs of all other basis vectors
arising from duals of vertices, in both arguments. That is, let $\{v_1^i,\ldots,v_n^i\}$ be the vertices of $\gam_i$, and without loss of generality
we may assume that $v^i=v_1^i$. We set $q_{0,i}((v_j^i)^*,(v_k^i)^*)=0$ unless both $v_j^i$ and $v_k^i$ are equal to $v_1^i$, and we
extend by bilinearity.

\begin{prop}\label{prop:ex}
If $\mathcal{V}'=\{(V_i',W_i',q_i')\}_{i\geq 0}$  is as above then:
\begin{enumerate}
\item
The family $\mathcal{V}'$ is a vector  space expander.
\item
The family $\mathcal{V}'$ does not arise from the cohomology of the right-angled Artin groups associated to a sequence of graphs.
\end{enumerate}
\end{prop}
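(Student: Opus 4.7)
The plan has two parts, mirroring the two claims of the proposition.

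For (1), I verify the three conditions of Definition~\ref{def:vs-expand} for $\mathcal{V}'$. Condition (i) is immediate: $\dim V_i' = \dim V_i = |\mathrm{Vert}(\gam_i)| \to \infty$ since $\{\gam_i\}$ is a graph expander. For condition (ii), I evaluate the $q_i'$-valence on the vertex-dual basis $\{(v_1^i)^*,\ldots,(v_n^i)^*\}$ of $V_i$. Because $q_i' = q_i \oplus q_{0,i}$ where $q_{0,i}$ is supported only at the single basis vector $(v^i)^*$, the count $|\{k : q_i'((v_j^i)^*, (v_k^i)^*) \neq 0\}|$ equals the $\gam_i$-valence of $v_j^i$ when $v_j^i \neq v^i$ and equals the valence of $v^i$ plus one otherwise. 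Hence $d(V_i') \leq N + 1$, where $N$ is the uniform valence bound on $\{\gam_i\}$.

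The main work is condition (iii). I will show that $h_F^{q_i'} \geq h_F^{q_i}$ for every nonzero subspace $F \subset V_i$ with $\dim F \leq (\dim V_i)/2$; combined with $h_{V_i} = h_{\gam_i}$ from Theorem~\ref{lem:cheeger} being bounded below, this gives a uniform positive lower bound on $h_{V_i'}$. Let $\pi\colon V_i \to L$ denote the linear functional that reads off the coefficient of $(v^i)^*$. Then $q_{0,i}(x,y) = \pi(x)\pi(y) z_i^*$, so the $q_i'$-orthogonal complement $C'$ of $F$ coincides with the $q_i$-orthogonal complement $C$ when $\pi(F) = 0$, and equals $C \cap \ker\pi$ otherwise. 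A short case analysis on whether $\pi|_C$ and $\pi|_{C\cap F}$ vanish shows that the numerator $\dim V_i - \dim F - \dim C' + \dim(C' \cap F)$ is always at least $\dim V_i - \dim F - \dim C + \dim(C \cap F)$: the only nontrivial case is $\pi|_C \neq 0$, and then either both $\dim C$ and $\dim(C \cap F)$ drop by one (leaving the numerator unchanged) or only $\dim C$ drops by one (strictly increasing the numerator). This is the main technical step; everything else is formal.

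For (2), I argue by contradiction. If $(V_i', W_i', q_i')$ were isomorphic as a pairing to $(H^1(A(\Lambda),L), H^2(A(\Lambda),L), \smile)$ for some finite simplicial graph $\Lambda$, then via the isomorphism there would exist a class $u \in H^1(A(\Lambda),L)$ with $u \smile u \neq 0$. However, by the description recalled in Subsection~\ref{subsec:coho}, $H^*(A(\Lambda),L)$ is a quotient of the exterior algebra $\Lambda_L(L^{|\mathrm{Vert}(\Lambda)|})$ in which $u \wedge u = 0$ is a defining relation, so $u \smile u = 0$ for every $u \in H^1$, and this holds in every characteristic. But by construction $q_i'((v^i)^*, (v^i)^*) = z_i^* \neq 0$, a contradiction. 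Hence no such graph $\Lambda$ (nor any sequence of such graphs) can produce the family $\mathcal{V}'$.
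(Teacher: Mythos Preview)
Your proof is correct and follows essentially the same approach as the paper. Both verify the valence bound on the vertex-dual basis to get $d(V_i')\leq d(\gam_i)+1$, and both establish $h_F^{q_i'}\geq h_F^{q_i}$ by comparing orthogonal complements: your functional $\pi$ plays the role of the paper's auxiliary complement $C_0$ (with $\ker\pi$ corresponding to the span of $\{v_2^*,\ldots,v_n^*\}$), and your case split on $\pi|_C$ and $\pi|_{C\cap F}$ is exactly the paper's observation that if $\dim(C\cap F)$ drops by one then so does $\dim C$. For (2) your argument is in fact slightly more complete than the paper's, since you note that $u\smile u=0$ for \emph{every} degree-one class (not just the vertex duals), which is what is needed to rule out an isomorphism of pairings rather than a literal equality.
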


The  second item of Proposition~\ref{prop:ex} means that there is no family of finite connected  simplicial graphs $\{\Lambda_i\}_{i\in\N}$
such that \[V_i'= H^1(A(\Lambda_i),L),\quad W_i'=H^2(A(\Lambda_i),L),\quad q_i'=\smile.\]

\begin{proof}[Proof of Proposition~\ref{prop:ex}]
Since $V_i'=V_i$, we have that $\dim V_i'\to\infty$. Now consider $q_i'$--valence, which we denote by $d_i$,  and we  compare with
the graph valence $d(\gam_i)$ of $\gam_i$. By setting
$B=S=(\mathrm{Vert}(\gam_i))^*$  in  the  definition of $q_i'$--valence, we see that $d_i(V)\leq d(\gam_i)+1$. Thus,  $\mathcal{V}'$ has uniformly bounded
valence. For each $i$,  the vector space $V_i'$ is  already pairing--connected with  respect to the pairing  $q_i$,
and $q_i(v,w)\neq 0$  implies  $q_i'(v,w)\neq  0$, so that
$V_i'$ is pairing--connected with respect to the pairing $q_i'$.

We  now need to estimate the Cheeger constants of $\mathcal{V}'$. We suppress the $i$ index, and write  $\{v_1^*,\ldots,v_n^*\}$
for a basis of $V'$ consisting of dual vectors of vertices of $\gam$. We assume
$v_1$ to be the distinguished vertex of $\gam$ such that $q_0(v_1^*,v_1^*)\neq 0$.
Let $0\neq F\subset V'$ be a subspace
of dimension at most $(\dim V')/2$, and let $h_0$ be the infimum of the Cheeger
constants of the family $\mathcal{V}'$ with respect to $q$, the usual cup product. We denote by
$C_q$ the orthogonal complement of $F$ with respect to $q$, by $C_0$ the orthogonal complement of $F$ with respect to $q_0$,
and by $C$ the orthogonal complement  of $F$ with respect to $q'$. Clearly, $C=C_q\cap C_0$.

Now, let $f\in F$ be written  as \[f=\sum_{i=1}^n\mu_i v_i^*,\] and let $x\in V$ be written as
\[x=\sum_{i=1}^n\lambda_i v_i^*.\] It follows  by definition
 that $q_0(v_i^*,x)=0$ for $i\neq 1$, so that  $q_0(f,x)=\lambda_1\mu_1$. Thus, the span
of $\{v_2^*,\ldots,v_n^*\}$ is always  contained in $C_0$, and
consequently $C_0$ has  dimension either $n$ or $n-1$. Thus, $\dim C$ is either equal
to $\dim C_q$ or $\dim C_q-1$. Similarly, $x\in  C\cap F$ if  and only if $x\in C_q\cap C_0\cap F$, so  that $\dim (C\cap F)$ is either equal
to $\dim (C_q\cap F)$ or $\dim (C_q\cap F)-1$.

Suppose that $\dim(C\cap F)=\dim(C_q\cap F)-1$. Then $C\neq C_q$, so  that $\dim C=\dim C_q-1$. In this case,
\[\dim C-\dim (C\cap F)=\dim C_q-1-(\dim (C_q\cap F)-1)=\dim C_q-\dim(C_q\cap F).\]

It follows that
$\dim C-\dim(C\cap F)\leq\dim C_q-\dim(C_q\cap F)$, and the difference between these is at  most $1$. Writing $N=\dim V'-\dim F$,
 the Cheeger constant of $F$ satisfies
 \[h_F=\frac{N-\dim C+\dim (C\cap F)}{\dim F}\geq\frac{N-\dim C_q+\dim (C_q\cap F)}{\dim F}.\]
This proves that the Cheeger constant of $\mathcal{V}'$ is bounded away from zero, which proves that $\mathcal{V}'$ is a vector
space expander family.

To see that $\mathcal{V}'$ does not arise from a graph expander family, we note that the cup product satisfies $v_1^*\smile v_1^*=0$, and
$q'$ is constructed so that $q'(v_1^*,v_1^*)\neq 0$. This establishes the proposition.
\end{proof}

Many variations  on the construction in this section can be carried out, which illustrates the fact that vectors space expander families are indeed
significantly more flexible than graph expander families.

\section*{Acknowledgements}
The authors thank A. Jaikin  and A. Lubotzky for helpful comments, and are grateful to the anonymous referee for helpful corrections and
suggestions.
Ram\'{o}n Flores is supported by FEDER-MEC grant MTM2016-76453-C2-1-P and FEDER grant US-1263032 from the Andalusian
Government.
Delaram Kahrobaei is supported in part by a
Canada's New Frontiers in Research Fund, under the Exploration grant entitled ``Algebraic Techniques for Quantum Security".
Thomas Koberda is partially supported  by an
Alfred P. Sloan Foundation Research Fellowship and by NSF Grants DMS-1711488 and DMS-2002596.
\bibliographystyle{amsplain}
\bibliography{ref}

\end{document}